\documentclass[11pt]{article}
\usepackage{amsfonts}
\usepackage{amssymb}
\usepackage{amsthm}
\usepackage{amsmath}
\usepackage{graphicx}
\usepackage{empheq}
\usepackage{indentfirst}
\usepackage{cite}
\usepackage{mathrsfs}
\usepackage{graphics}
\usepackage{enumerate}
\usepackage{color}

\setlength{\oddsidemargin}{0mm} \setlength{\evensidemargin}{0mm}
\setlength{\topmargin}{-10mm} \setlength{\textheight}{230mm}
\setlength{\textwidth}{155mm}

 \newtheorem{theorem}{Theorem}[section]
 \newtheorem{corollary}{Corollary}[section]
 \newtheorem{lemma}{Lemma}[section]
 \newtheorem{proposition}{Proposition}[section]

 \newtheorem{remark}{Remark}[section]
 \numberwithin{equation}{section}

\allowdisplaybreaks[4]

\def\e{\varepsilon}

\newcommand{\beq}{\begin{equation}}
\newcommand{\eeq}{\end{equation}}
 \def\non{\nonumber }
\def\bea{\begin{eqnarray}}
\def\eea{\end{eqnarray}}

\begin{document}
\title{Boundedness and Exponential Stabilization in a Parabolic-Elliptic Keller--Segel Model with Signal-dependent Motilities for Local Sensing Chemotaxis}

\author{Jie Jiang\thanks{Innovation Academy for Precision Measurement Science and Technology, CAS,
		Wuhan 430071, HuBei Province, P.R. China,
		\textsl{jiang@wipm.ac.cn, jiang@apm.ac.cn}.}}
\date{\today}

\maketitle

\begin{abstract} 
In this paper we consider the initial Neumann boundary value problem for a degenerate Keller--Segel model which features a signal-dependent non-increasing motility function. The main obstacle of analysis comes from the possible degeneracy when the signal concentration becomes unbounded. In the current work, we are interested in boundedness and exponential stability of the classical solution in higher dimensions. With the aid of a Lyapunov functional and a delicate Alikakos--Moser type iteration, we are able to establish a time-independent upper bound of the concentration provided that the motility function decreases algebraically. Then we further prove the uniform-in-time boundedness of the solution by constructing  of an estimation involving a weighted energy. Finally, thanks to the Lyapunov functional again, we prove the exponential stabilization toward the spatially homogeneous steady states. Our boundedness result improves those in \cite{Anh19,FJ20a} and the exponential stabilization is obtained for the first time.

{\bf Keywords}: Classical solutions, boundedness,  exponential stabilization, degeneracy, Keller--Segel models.\\
\end{abstract}

\section{Introduction}	
Chemotaxis  is  a biased movement of cells due to a chemical gradient which plays a significant role in
diverse biological phenomena. In the 1970s,  Keller and Segel proposed in their seminal work \cite{KSb} the following model for chemotaxis:
 \begin{equation}\label{ksv}
 \begin{cases}
 u_t=\nabla \cdot(\gamma(v)\nabla u-u\chi(v)\nabla v),\\
 \varepsilon v_t=\Delta v-v=u.
 \end{cases}
 \end{equation}
Here, $u$ and $v$ denote the density of cells and the concentration of signals, respectively.  The signal-dependent  cell diffusion rate $\gamma$ and chemo-sensitivity $\chi$ are linked via
 \begin{equation}\label{ksv0}
 \chi(v)=(\sigma-1)\gamma'(v),
 \end{equation}
where the parameter $\sigma\geq0$ is a constant proportional to the distance between chemical receptors in the cells. In the case $\sigma>0$,  a cell determines its moving direction due to a gradient sensing mechanism by calculating the difference  of concentrations at different spots, while in  the case $\sigma=0$, the distance between receptors is zero and thus chemotactic movement occurs because of an undirected effect on activity due to the presence of a chemical sensed by a single receptor (local sensing). One notices that in the latter case, the first equation of \eqref{ksv} has the following concise form
\begin{equation}\label{ksvls}
	u_t=\Delta (\gamma(v)u)
\end{equation}
where $\gamma$ stands for a signal-dependent motility. 	More recently in \cite{PRL12,Sciencs11} , by adding a logistic source term on the right hand side of  \eqref{ksvls}, this model was also applied to describe the process of pattern formations via the so-called ``self-trapping" mechanism, where the cellular motility $\gamma(\cdot)$ was assumed to be suppressed by the concentration of signals.  In other words, $\gamma(\cdot)$ is a  signal-dependent decreasing function, i.e., $\gamma'(v)< 0$. We remark that $\gamma'(v)< 0$ indicates that cells are attracted by high concentration of signals.

In this paper, we are interested in the boundedness and stability of classical solutions to the initial boundary value problem for the parabolic-elliptic simplification of the original Keller--Segel model with signal-dependent motility for local sensing chemotaxis, i.e., $\varepsilon=\sigma=0$ in \eqref{ksv}-\eqref{ksv0}:
\begin{equation}
\begin{cases}\label{chemo1}
u_t=\Delta (\gamma (v)u)&x\in\Omega,\;t>0,\\
-\Delta v+v=u&x\in\Omega,\;t>0,\\
\partial_\nu u=\partial_\nu v=0,\qquad &x\in\partial\Omega,\;t>0,\\
u(x,0)=u_0(x),\qquad & x\in\Omega
\end{cases}
\end{equation}where $\Omega\subset\mathbb{R}^n$ with $n\geq3$  being a smooth bounded domain and
\begin{equation}\label{ini}
u_0\in C^0(\overline\Omega),\quad u_0\geq0, \quad u_0\not\equiv0.
\end{equation}
In general we require that
\begin{equation}\label{gamma0a}
\mathrm{(A0)}:\gamma(\cdot)\in C^3[0,+\infty),\;\gamma(\cdot)>0,\;\;\gamma'(\cdot)\leq0\;\;\text{on}\;(0,+\infty),\;\;\lim\limits_{s\rightarrow+\infty}\gamma(s)=0.
\end{equation}

In view of the asymptotically vanishing property of $\gamma$, an apparent difficulty in analysis lies in the possible degeneracy when $v$ becomes unbounded. Theoretical analysis for the above Keller--Segel model with signal-dependent motility has  attracted a lot of interest in recent years, see e.g., \cite{Anh19,JKW18,LW2020,WW2019,YK17,TaoWin17,BLT20,FJ19a,FJ19b,FJ20a,LiJiang,JW20}. 
A common strategy  used in most  literature is to  derive the $L^\infty_tL^p_x$-boundedness of $u$  with some $p>\frac{n}{2}$ by energy method. Then $L^\infty_tL^\infty_x$-boundedness of $v$ follows via an application of standard elliptic/parabolic regularity theory to the equation for $v$. However, this method seems only to work under restrictive conditions, for example, specific choices of $\gamma$ \cite{Anh19,JW20,YK17}, or with presence of logistic source terms \cite{JKW18,LW2020,WW2019}.
Recently, a new comparison approach  was proposed in \cite{FJ19a,FJ19b,FJ20a,LiJiang}. By introducing an auxiliary elliptic problem that enjoys a comparison principle, explicit point-wise upper bound estimates of $v$ were established directly for generic motility functions satisfying $\mathrm{(A0)}$ in any spatial dimension. In fact, it was proved that $v(x,t)$  grows at most exponentially in time and hence degeneracy cannot happen in finite time. In addition, a delicate  Alikakos--Moser type iteration was further developed in \cite{FJ20a} to deduce the uniform-in-time boundedness of $v$ directly in higher dimensions  without the help of any integrability of $u$.

Previous studies on \eqref{chemo1} strongly indicate that the dynamics of solution is closely related to the decay rate of $\gamma$. When $n=2$, it was proved in \cite{FJ19a}
that classical solution always exists globally with any large initial datum and generic $\gamma$ satisfying $\mathrm{(A0)}$. Furthermore, if $\gamma$ satisfies additionally 
\begin{equation}
\label{A2'}
\lim\limits_{s\rightarrow+\infty}e^{\alpha s}\gamma(s)=+\infty,\;\;\forall\;\alpha>0,
\end{equation}then for any large initial datum, there is a unique global classical solution which is uniformly-in-time bounded \cite{FJ20a}. Note assumption \eqref{A2'} allows $\gamma$ to take any decreasing form within a finite region and moreover, any motility function decreases slower  than a standard exponentially decreasing function at the high concentrations will guarantee the boundedness,  for example, $\gamma(s)=e^{-\sqrt{s}}$. If $\gamma$ decays even faster such that there is $\chi>0$ \begin{equation}
\label{A2''}
\lim\limits_{s\rightarrow+\infty}e^{\chi s}\gamma(s)=+\infty,
\end{equation}then the solution of \eqref{chemo1} is uniformly-in-time bounded provided that $\|u_0\|_{L^1(\Omega)}<\frac{4\pi}{\chi}$ \cite{FJ20a}. In particular, if $\gamma(s)=e^{-\chi s}$, a novel critical-mass phenomenon was observed in \cite{FJ19a} that with any sub-critical mass, the global solution is uniformly-in-time bounded while on the other hand, the global solution may blow up at time infinity with certain super-critical mass (see \cite{FJ19b,JW20,BLT20} for the fully parabolic case).

In higher dimensions $n\geq3$, boundedness was studied in several work provided that $\gamma$ satisfies some algebraically decreasing assumptions \cite{FJ20a,Wang20,Anh19}. 
In particular, if  $\gamma(s)=s^{-k}$ with some $k>0$, one notices  a variant form  of \eqref{chemo1} reads
\begin{equation}\label{variant2}
 \begin{cases}
u_t=\nabla\cdot\left[v^{-k}(\nabla u-ku\nabla \log v)\right],\\
-\Delta v+v=u,
\end{cases} 
\end{equation}
which resembles the logarithmic  Keller--Segel model:
\begin{equation}\label{logKS}
\begin{cases}
u_t=\nabla\cdot (\nabla u-ku\nabla \log v),\\
-\Delta v+v=u.
\end{cases}
\end{equation}
The above two systems share the same set of equilibria. Besides, they has the same scaling structure. Indeed, for a solution $(u,v)$, one easily checks that $(u_\lambda,v_\lambda)$ is also a solution, where $u_\lambda(t,x)=\lambda u(t,x)$ and $u_\lambda(t,x)=\lambda u(t,x)$ with any $\lambda>0$. Such a scaling invariance indicates that existence results are usually independent of the size of initial datum.

There are very limited theoretical research on both \eqref{variant2} and \eqref{logKS}. Roughly speaking, the dynamics of solution seem to be determined by the size of $k$. For the logarithmic Keller--Segel system \eqref{logKS},  there are several study on  the admissible range of $k$ for global existence/boundedness and on the other hand, blowup solution was constructed only  in the radial symmetric case   $n\geq3$ and $k>\frac{2n}{n-2}$ \cite{Nagai98}. The threshold number is still unclear. We refer the reader to  \cite{FS2016,fs2018, LanWin} for  a complete description of related topics for \eqref{logKS}. For the degenerate system \eqref{variant2}, boundedness of global solutions  with any $0<k<\frac{2}{n-2}$ was first shown in \cite{Anh19} and later in \cite{FJ20a,Wang20} via different methods. Moreover, existence of global (but likely growing up) classical solution  was obtained in \cite{FJ20a} within a larger range $0<k<\frac{\sqrt{2n}+2}{n-2}$.

In the present work, we aim to improve the uniform-in-time boundedness result for \eqref{chemo1} in higher dimensions. The key observation of this contribution is that under an assumption
\begin{equation}\label{gamma}
\mathrm{(A1)}: \gamma(s)+s\gamma'(s)\geq0,\qquad\forall\;s>0,
\end{equation}
system \eqref{chemo1} possesses a Lyapunov functional (see also \cite{Anh19}). Then we can perform an Alikakos--Moser iteration to derive a time-independent upper bound of $v$ under  weakened conditions compared with \cite{FJ20a}. Besides, with the aid of the Lyapunov functional, exponential stabilization of the global solution toward the spatially homogeneous steady states $(\overline{u_0},\overline{u_0})$ is obtained for the first time. A direct consequence of our result to  the specific case $\gamma(s)=s^{-k}$ is that  boundedness of solutions can be improved to any $k\leq1$ when $n=4,5$, or $k<\frac{4}{n-2}$ when $n\geq6$. Besides, the solution will converge to $(\overline{u_0},\overline{u_0})$ exponentially as time tends to infinity.

In order to formulate our result in a more general framework, we introduce the following condition:
 \begin{equation}\label{gamma2}\mathrm{(A2)}:\qquad\text{there is $k>0$ such that}
 \lim\limits_{s\rightarrow+\infty}s^{k}\gamma(s)=+\infty.
 \end{equation}
 Note that $\mathrm{(A2)}$ allows $\gamma$ to take other algebraically decreasing functions, for example, $\gamma(s)=\frac{1}{s^{k}\log(1+s)}$ with any $k>0$.

 Now we are in a position to state the main results of the current work.
\begin{theorem}
	\label{TH1} Assume $n\geq4$. Suppose that $\gamma$ satisfies $\mathrm{(A0)}$,  $\mathrm{(A1)}$ and
	\begin{equation}
\label{A3}\mathrm{(A3)}:\qquad l_0|\gamma'(s)|^2\leq \gamma(s)\gamma''(s),\;\text{with some }\;l_0> \frac{n+2}{4}\;
\;\text{for all}\;s>0.
\end{equation}
 Then for any initial datum satisfying \eqref{ini}, problem \eqref{chemo1} possesses  a unique  global classical solution that is uniformly-in-time bounded.
 
 Moreover, there exist $\alpha>0$ and $C>0$ depending on $u_0,\gamma,n$ and $\Omega$ such that for all $t\geq1$,
 \begin{equation}
 	\|u(\cdot,t)-\overline{u_0}\|_{L^\infty(\Omega)}+\|v(\cdot,t)-\overline{u_0}\|_{W^{1,\infty}(\Omega)}\leq Ce^{-\alpha t}
 \end{equation}where $\overline{u_0}=\frac{1}{|\Omega|}\int_\Omega u_0dx.$
\end{theorem}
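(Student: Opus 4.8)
The plan is to organize the argument into three phases: (i) a time-independent $L^\infty$ bound on $v$; (ii) uniform-in-time boundedness of $u$; and (iii) exponential convergence to the homogeneous state. I would begin with the Lyapunov functional guaranteed by $\mathrm{(A1)}$. Multiplying the first equation of \eqref{chemo1} by $\log u + \Phi(v)$ for a suitable primitive $\Phi$ related to $1/\gamma$, and using the elliptic equation $-\Delta v + v = u$ to rewrite the cross terms, one obtains an energy identity of the form $\frac{d}{dt}\mathcal{E}(u,v) + \mathcal{D}(u,v) = 0$, where $\mathcal{D}\geq 0$. This yields $\int_0^\infty \mathcal{D}\,dt < \infty$ and, crucially, an a priori bound on quantities like $\int_\Omega |\nabla v|^2$ and $\int_\Omega u\log u$ uniformly in time. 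I would also invoke the mass conservation $\int_\Omega u(\cdot,t)\,dx = \int_\Omega u_0\,dx$, which follows immediately from the Neumann condition.

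The technical heart is the Alikakos--Moser iteration producing a time-independent $L^\infty$ bound on $v$, which is where assumptions $\mathrm{(A2)}$ (implicit in the hypotheses, via $\mathrm{(A3)}$ forcing at-least-algebraic decay) and $\mathrm{(A3)}$ with $l_0 > \tfrac{n+2}{4}$ enter. Following the scheme of \cite{FJ20a}, I would test the $v$-equation against powers $v^{2p-1}$ (or use the auxiliary comparison function from \cite{FJ19a,FJ20a}), exploiting the elliptic equation to convert $u$ into $-\Delta v + v$ and integrating by parts; the condition $\gamma(s)\gamma''(s)\geq l_0|\gamma'(s)|^2$ is precisely what keeps the ``bad'' terms generated at each iteration step controllable, with the gain in the exponent at each stage dominating the loss, provided $l_0$ exceeds the dimensional threshold $\tfrac{n+2}{4}$. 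Tracking the constants through the recursion and summing the resulting series gives $\|v(\cdot,t)\|_{L^\infty(\Omega)} \leq C$ independent of $t$. I expect \textbf{this iteration — verifying that the recursive inequality closes under the stated $l_0$-threshold} — to be the main obstacle, since it requires delicate bookkeeping and the right interpolation (Gagliardo--Nirenberg) at each step.

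Once $v$ is bounded uniformly in time, $\gamma(v)$ is bounded below by a positive constant, so the first equation is uniformly parabolic (non-degenerate). Boundedness of $u$ then follows by a standard bootstrap: from $-\Delta v + v = u$ with $v \in L^\infty$ one gets $v \in W^{2,p}$ for all $p$, hence $\nabla v \in L^\infty$; writing the $u$-equation as $u_t = \nabla\cdot(\gamma(v)\nabla u + \gamma'(v) u \nabla v)$ and testing with $u^{p-1}$, the lower bound on $\gamma(v)$ absorbs the drift term, giving $L^\infty_t L^p_x$ bounds for every $p$ and then $L^\infty$ via Moser iteration or semigroup estimates. Alternatively, one may use the weighted-energy estimate mentioned in the abstract to directly upgrade integrability; I would use whichever is cleaner given the earlier lemmas.

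For the exponential stabilization, I would return to the Lyapunov functional. Since $\mathcal{E}$ is bounded below, $\mathcal{E}(u(t),v(t))$ converges; combined with $\int_0^\infty\mathcal{D}\,dt<\infty$ and now-available uniform $C^{2+\theta}$ parabolic estimates (from the non-degeneracy and bootstrap above), a standard compactness/$\omega$-limit argument forces $(u,v)\to(\overline{u_0},\overline{u_0})$. To get the \emph{exponential} rate, I would linearize: set $w = u - \overline{u_0}$, $z = v - \overline{u_0}$, and show that near the constant state the dissipation $\mathcal{D}$ controls $\mathcal{E} - \mathcal{E}_{\min}$ from below, i.e., an inequality $\mathcal{D} \geq c(\mathcal{E} - \mathcal{E}_{\min})$ holds once the solution is close enough (using a Poincaré--Wirtinger inequality on the zero-mean perturbation $w$, together with the spectral gap of $-\Delta + 1$ for $z$). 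Grönwall then yields $\mathcal{E}(t) - \mathcal{E}_{\min} \leq Ce^{-\alpha t}$, and finally converting this energy decay into the stated $L^\infty$/$W^{1,\infty}$ decay uses elliptic regularity for $z$ and parabolic smoothing (interior $L^p$--$L^\infty$ estimates on the interval $[t-1,t]$) for $w$, giving the bound for all $t\geq 1$.
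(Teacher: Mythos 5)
Your overall three-phase outline (time-independent $L^\infty$ bound on $v$, then $L^\infty$ bound on $u$, then exponential decay via the Lyapunov functional) matches the paper's strategy, and the Alikakos--Moser/Lyapunov combination is indeed the right engine. However, there are two substantive discrepancies, one of which is a genuine gap.

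First, your Lyapunov functional is not the one the paper uses. You propose the entropy-type functional obtained by testing with $\log u + \Phi(v)$, but the paper simply multiplies the $u$-equation by $v$ and substitutes $u=v-\Delta v$, producing
\begin{equation*}
\tfrac12\tfrac{d}{dt}\bigl(\|\nabla v\|^2+\|v\|^2\bigr)+\int_\Omega\gamma(v)|\Delta v|^2+\int_\Omega\bigl(\gamma(v)+v\gamma'(v)\bigr)|\nabla v|^2=0,
\end{equation*}
which under $\mathrm{(A1)}$ gives a time-independent $H^1$ bound on $v$, hence $\|v\|_{L^{q_*}}$ with $q_*=\tfrac{2n}{n-2}$ --- precisely the base case for the Moser iteration. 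Your version outputs $\int u\log u$ and $\int|\nabla v|^2$, which could in principle feed the iteration, but it is more delicate to close (and requires positivity of $u$ and a correct choice of $\Phi$ that you leave unspecified). Related to this, your description of the iteration slightly misattributes the role of $\mathrm{(A3)}$: within the $v$-iteration itself what is used is only the algebraic-decay hypothesis $\mathrm{(A2)}$ with some $k<\tfrac{4}{n-2}$; the threshold $l_0>\tfrac{n+2}{4}$ enters because $\mathrm{(A3)}$ implies $\mathrm{(A2)}$ for any $k>\tfrac{1}{l_0-1}$ (this is a separate ODE lemma), and separately $l_0$ re-enters in the $u$-estimate.

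Second, and more seriously, your boundedness-of-$u$ step contains a circular argument. You write that once $v\in L^\infty$ uniformly, elliptic regularity applied to $-\Delta v+v=u$ gives $v\in W^{2,p}$ for all $p$ and hence $\nabla v\in L^\infty$. That conclusion requires $u\in L^p$ on the right-hand side, which is exactly what you are trying to prove; $v\in L^\infty$ alone yields no Calder\'on--Zygmund gain. Consequently, testing with $u^{p-1}$ and invoking Young's inequality leaves you with a term $C\int_\Omega u^p|\nabla v|^2$ that the lower bound on $\gamma(v)$ does \emph{not} absorb, and $\nabla v$ is not yet known to be bounded in anything better than $L^2$. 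The paper's way around this is precisely the weighted energy $\int_\Omega u^{p+1}\gamma^q(v)$: after the algebraic cancellations, the dangerous $|\nabla v|^2$ term appears with coefficient $q\bigl((p+q+1)|\gamma'|^2+\gamma\gamma''\bigr)$, and under $\mathrm{(A3)}$ with $q=\tfrac{pl_0}{2}$ this coefficient is strictly larger than what Young's inequality requires to absorb the cross term, so the $|\nabla v|^2$ contribution becomes a \emph{good} term and never needs to be bounded. This structural use of $\mathrm{(A3)}$ is the essential mechanism that your ``standard bootstrap'' misses; without it the $L^p$ bound on $u$ does not close.

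Your exponential-stabilization plan (abstract $\omega$-limit argument followed by a local \L{}ojasiewicz-type inequality $\mathcal{D}\geq c(\mathcal{E}-\mathcal{E}_{\min})$) is a legitimate alternative in spirit, but is more roundabout than necessary here. The paper exploits that the Lyapunov functional is exactly $\tfrac12\|v-\overline{u_0}\|_{H^1}^2$ up to a constant, so once $v\leq v^*$ gives $\gamma(v)\geq\gamma(v^*)$, Poincar\'e's inequality immediately turns the dissipation identity into a linear differential inequality and yields exponential decay of $\|v-\overline{u_0}\|_{H^1}$ with no linearization or compactness step; the $L^\infty/W^{1,\infty}$ rates then follow from $L^2\to L^p\to L^\infty$ bootstrapping and Neumann heat semigroup estimates.
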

\begin{remark}
Thanks to the strictly positive time-independent lower bound $v_*$ of $v$ for $(x,t)\in\overline{\Omega}\times[0,\infty)$ given in  Lemma \ref{lbdv} in the next section, assumptions  $\mathrm{(A1)}$ and  $\mathrm{(A3)}$ can be weaken as to hold for all $s\geq v_*$. Thus, our existence and boundedness results also hold true if $\gamma(s)$ has singularities at $s=0$, for example $\gamma(s)=s^{-k}$ with $k>0$. In such cases, we can simply replace $\gamma(s)$ by a new motility function $\tilde{\gamma}(s)$ which satisfies $\mathrm{(A0)}$ and coincides with $\gamma(s)$ for $s\geq\frac{v_*}{2}$.
\end{remark}

In particular, for the typical case $\gamma(v)=v^{-k}$, we have
\begin{theorem}
	Suppose  that $\gamma(v)=v^{-k}$ and $n\geq4$. Then for any initial datum satisfying \eqref{ini},
problem \eqref{chemo1} has a unique global classical solution which is uniformly-in-time bounded provided that $0<k\leq 1$ when $n=4,5$, or $0<k<\frac{4}{n-2}$ when $n\geq6.$ Moreover, there exist $\alpha>0$ and $C>0$ depending on $u_0,k,n$ and $\Omega$ such that for all $t\geq1$,
\begin{equation}\label{exst}
\|u(\cdot,t)-\overline{u_0}\|_{L^\infty(\Omega)}+\|v(\cdot,t)-\overline{u_0}\|_{W^{1,\infty}(\Omega)}\leq Ce^{-\alpha t}.
\end{equation}	
\end{theorem}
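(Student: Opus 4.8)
The plan is to obtain this statement as a direct consequence of Theorem~\ref{TH1}, by verifying that the motility $\gamma(s)=s^{-k}$ --- after a harmless modification near the origin --- satisfies $\mathrm{(A0)}$, $\mathrm{(A1)}$ and $\mathrm{(A3)}$ precisely on the claimed ranges of $k$. Since $s^{-k}$ is singular at $s=0$, I would first invoke the strictly positive, time-independent lower bound $v_*>0$ for $v$ furnished by Lemma~\ref{lbdv}; the key point is that this bound is produced only from mass conservation $\int_\Omega u=\int_\Omega u_0$ and the elliptic identity $-\Delta v+v=u$ (e.g.\ via positivity of the Neumann Green's function of $-\Delta+1$), hence it depends solely on $\int_\Omega u_0$, $n$ and $\Omega$ and is \emph{insensitive} to the behaviour of $\gamma$ near $0$. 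Following the Remark after Theorem~\ref{TH1}, I would then replace $\gamma$ by a function $\tilde\gamma\in C^3[0,\infty)$ satisfying $\mathrm{(A0)}$ and agreeing with $s^{-k}$ on $[v_*/2,\infty)$; thereafter $\mathrm{(A1)}$ and $\mathrm{(A3)}$ only have to be checked on $[v_*,\infty)$, where $\gamma(s)=s^{-k}$.

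The verification is then an elementary computation. For $\mathrm{(A1)}$,
\[
\gamma(s)+s\gamma'(s)=s^{-k}-ks^{-k}=(1-k)s^{-k}\geq0\quad\Longleftrightarrow\quad k\leq1 .
\]
For $\mathrm{(A3)}$, from $\gamma'(s)=-ks^{-k-1}$ and $\gamma''(s)=k(k+1)s^{-k-2}$ one gets
\[
\gamma(s)\gamma''(s)=k(k+1)s^{-2k-2},\qquad |\gamma'(s)|^2=k^2 s^{-2k-2},
\]
so the inequality $l_0|\gamma'(s)|^2\leq\gamma(s)\gamma''(s)$ holds if and only if $l_0\leq 1+\tfrac1k$. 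Consequently one may choose a constant $l_0>\tfrac{n+2}{4}$ exactly when $1+\tfrac1k>\tfrac{n+2}{4}$, i.e.\ when $k<\tfrac{4}{n-2}$.

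It remains to intersect the two constraints $k\leq1$ and $k<\tfrac{4}{n-2}$. Since $\tfrac{4}{n-2}\geq\tfrac43>1$ for $n=4,5$ whereas $\tfrac{4}{n-2}\leq1$ for $n\geq6$, the admissible set is exactly $0<k\leq1$ when $n=4,5$ and $0<k<\tfrac{4}{n-2}$ when $n\geq6$. On this set Theorem~\ref{TH1} applies to the modified problem and delivers a unique, uniformly-in-time bounded global classical solution together with the exponential estimate \eqref{exst}; because $v\geq v_*>v_*/2$ on $\overline\Omega\times[0,\infty)$, the modified solution in fact solves the original system with $\gamma(v)=v^{-k}$, so the conclusion transfers back unchanged.

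I do not anticipate any genuine analytic obstacle here: the whole argument is the bookkeeping of these two inequalities plus the matching of the parameter ranges. The only point that requires a little care is the legitimacy of the cutoff of $\gamma$ near $s=0$, namely ensuring there is no circular dependence between ``solve the PDE'', ``extract $v_*$'' and ``choose the modification''; this is resolved precisely by the observation above that $v_*$ in Lemma~\ref{lbdv} is determined by $\int_\Omega u_0$, $n$ and $\Omega$ alone, independently of $\gamma$.
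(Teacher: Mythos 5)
Your proposal is correct and follows essentially the same route as the paper: the paper's Remark~1.1 explicitly licenses the modification of $\gamma$ near $s=0$ using the $\gamma$-independent lower bound $v_*$ from Lemma~\ref{lbdv}, and Corollaries~3.1 and~4.1 implicitly rest on exactly the elementary computations you spell out, namely $\gamma+s\gamma'=(1-k)s^{-k}\geq0\Leftrightarrow k\leq1$ for $\mathrm{(A1)}$ and $l_0|\gamma'|^2\leq\gamma\gamma''\Leftrightarrow l_0\leq1+\tfrac1k$ for $\mathrm{(A3)}$, whose intersection with $l_0>\tfrac{n+2}{4}$ yields $k<\tfrac{4}{n-2}$. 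Your explicit discussion of why there is no circularity in the cutoff (i.e.\ that $v_*$ depends only on $\|u_0\|_{L^1}$, $n$, $\Omega$) is a nice clarification, but it is the same mechanism the paper invokes.
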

\begin{remark}
Combined with the results in \cite{Anh19,FJ20a}, uniform-in-time boundedness for the case $\gamma(s)=s^{-k}$ is now available if
\begin{equation}
0<	k\;\;\begin{cases}
	<\infty,\qquad &n=2,\\
	<2,\qquad &n=3,\\
	\leq 1,\qquad &n=4,5,\\
	<\frac{4}{n-2},\qquad &n\geq6.
	\end{cases}
\end{equation}
And the exponential decay \eqref{exst} also holds when $n\leq 3$ if $0<k\leq1$. We remark that the convergence of $(u,v)$ toward the constant solution was found in \cite{Anh19} when $\gamma(s)=s^{-k}$ supposing that $k\in(0,\frac{2}{n-2})\cap (0,1]$. However, no convergence rate was given.
\end{remark}
Now, let us sketch the main  idea of our proof. First, we would like to recall the following  identity which unveils the key mechanism of our system:
\begin{equation*}
	v_t+u\gamma(v)=(I-\Delta)^{-1}[u\gamma(v)].
\end{equation*}Here $\Delta$ denotes the usual Neumann Laplacian operator. The above key identity was first observed in \cite{FJ19a,FJ19b} which along with a new comparison approach, gives rise to a point-wise upper bound of $v$ with generic functions satisfying  $\mathrm{(A0)}$. Furthermore, one notices that a substitution of the second equation of \eqref{chemo1} gives a variant form of this key identity:
\begin{equation}\label{keyidvar}
	v_t-\gamma(v)\Delta v+v\gamma(v)=(I-\Delta)^{-1}[u\gamma(v)].
\end{equation} Thanks to the comparison principle of elliptic equations and the decreasing property of $\gamma$, one has
\begin{equation*}
	(I-\Delta)^{-1}[u\gamma(v)]\leq \gamma(v_*)(I-\Delta)^{-1}[u]= \gamma(v_*)v
\end{equation*}with $v_*$ being the strictly positive lower bound for $v$ given by Lemma \ref{lbdv} below.
Then under the assumption $\mathrm{(A2)}$, based on a delicate Alikakos--Moser type iteration, we can show that uniform-in-time upper bound of $v$ is obtainable if we have time-independent  estimates for $\sup_{t\geq0}\|v\|_{L^q}$ with any $q>\frac{nk}{2}$ beforehand. 

On the other hand, system \eqref{chemo1} possesses  a Lyapunov functional (see also \cite{Anh19}) such that
\begin{equation}\label{Lyaintro1}
	\frac{1}{2}\frac{d}{dt}\left(\|\nabla v\|^2+\|v\|^2\right)+\int_\Omega \gamma(v)|\Delta v|^2dx+\int_\Omega (\gamma(v)+v\gamma'(v))|\nabla v|^2dx=0
\end{equation}
which implies a  time-independent estimate of $\sup_{t\geq0}\|v\|_{H^1(\Omega)}$ under the assumption $\mathrm{(A1)}$. Then  the Sobolev embedding $H^1\hookrightarrow L^{q_*}$  with $q_*=\frac{2n}{n-2}$ yields to a time-independent estimates for $\sup_{t\geq0}\|v\|_{L^{q_*}}$, which together with Alikakos--Moser iteration indicates that $v$ is uniform-in-time bounded provided that
$q_*>\frac{nk}{2}$, i.e., $k<\frac{4}{n-2}$. 

Next, in order to prove the boundedness of solutions, it suffices to establish $L^\infty_tL^p_x$-boundedness of $u$ with some $p>\frac{n}{2}$ since higher-order estimates can be then proved by standard iterations and bootstrap argument. Recalling that $v$ is now bounded from above, $\gamma(v)$ is bounded from below by a strictly positive time-independent constant due to its decreasing property. With the aid of the key identity  again, we construct an estimation involving a weighted energy $\int_\Omega u^p\gamma^q(v)$, which with proper choice of $p>\frac{n}{2}$ and $q>0$ will finally imply the boundedness.

Last, the Lyapunov functional also plays a crucial role in the study of exponential stabilization. Since $\overline{v(t)}=\overline{u_0}$, we have $\int_\Omega v_tdx=0$ for all $t>0$ and hence   the energy-dissipation relation  \eqref{Lyaintro1} can be rewritten as 
\begin{equation}
\frac{1}{2}\frac{d}{dt}\left(\|\nabla v\|^2+\|v-\overline{u_0}\|^2\right)+\int_\Omega \gamma(v)|\Delta v|^2dx+\int_\Omega (\gamma(v)+v\gamma'(v))|\nabla v|^2dx=0.
\end{equation}With the boundedness of $v$ at hand, one can deduce from above by Poincar\'e's inequality that $\|v-\overline{u_0}\|_{H^1}$ decay exponentially. Then by a  bootstrapping strategy, exponential stabilization of $(u,v)$ can be further acquired in $L^\infty\times W^{1,\infty}$.

We remark that if the second equation of \eqref{chemo1} is of parabolic type. It is still unknown whether the system possesses a Lyapunov functional like \eqref{Lyaintro1}. Thus, at the present stage, we cannot improve the results in \cite{FJ20a} for the fully parabolic case using the same idea.

The rest of the paper is organized as follows.  In Section 2, we provide some preliminary results and recall some useful lemmas. In Section 3 we first construct a Lyapunov functional which satisfies certain dissipation property. Then  using a delicate Alikakos--Moser iteration, we  derive the uniform-in-time upper bounds of $v$. In Section 4, we first establish the boundedness of the weighted energy which gives rise to the boundedness of the global classical solutions. Then using the Lyapunov functional again we prove the exponential stabilization toward the constant steady states.

\section{Preliminaries}	
	In this section, we recall some useful lemmas. First, local existence and uniqueness of classical solutions to system \eqref{chemo1} can be
	established by the standard fixed point argument and  regularity theory for elliptic/parabolic equations. Similar proof can be found in \cite[Lemma 3.1]{Anh19} and hence here we omit the detail here.
	\begin{theorem}\label{local}
		Let $\Omega$ be a smooth bounded domain of $\mathbb{R}^n$. Suppose that $\gamma(\cdot)$ satisfies $\mathrm{(A0)}$ and $u_0$ satisfies \eqref{ini}. Then there exists $T_{\mathrm{max}} \in (0, \infty]$ such that problem \eqref{chemo1} permits a unique non-negative classical solution $(u,v)\in (C^0(\overline{\Omega}\times[0,T_{\mathrm{max}}))\cap C^{2,1}(\overline{\Omega}\times(0,T_{\mathrm{max}})))^2$. Moreover, the following mass conservation holds
		\begin{equation*}
		\int_{\Omega}u(\cdot,t)dx=\int_{\Omega}v(\cdot,t)dx=\int_{\Omega}u_0 dx
		\quad \text{for\ all}\ t \in (0,T_{\mathrm{max}}).
		\end{equation*}	
		If $T_{\mathrm{max}}<\infty$, then

		\begin{equation*}
	\limsup\limits_{t\nearrow T_{\mathrm{max}}} 
		\|u(\cdot,t)\|_{L^\infty(\Omega)}=\infty.
		\end{equation*}
		\end{theorem}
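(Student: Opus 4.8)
The plan is to follow the standard two-step fixed-point scheme for coupled parabolic--elliptic chemotaxis systems, then to verify the a priori identities and finally the extensibility criterion. For the \emph{local existence and uniqueness}, fix $M>2\|u_0\|_{L^\infty(\Omega)}$ and, for $T>0$ to be chosen small, consider the closed convex set
\[
\mathcal{S}_T=\Big\{\,w\in C^0(\overline\Omega\times[0,T])\ :\ w\geq0,\ \|w\|_{C^0(\overline\Omega\times[0,T])}\leq M,\ w(\cdot,0)=u_0\,\Big\}.
\]
Given $w\in\mathcal{S}_T$, one first solves the linear elliptic Neumann problem $-\Delta v+v=w(\cdot,t)$ for each fixed $t$; elliptic $L^p$ and Schauder theory, together with $w\in C^0$, produce $v=v[w]$ with $v(\cdot,t)\in W^{2,p}(\Omega)$ for all $p<\infty$, hence $v[w]\in C^0([0,T];C^{1+\beta}(\overline\Omega))$ for some $\beta\in(0,1)$, and $0<v_*(M)\leq v[w]\leq C(M)$ by the elliptic maximum principle. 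Writing the first equation in divergence form $u_t=\nabla\cdot\big(\gamma(v)\nabla u+u\,\gamma'(v)\nabla v\big)$ (equivalently, in non-divergence form $u_t=\gamma(v)\Delta u+2\gamma'(v)\nabla v\cdot\nabla u+(\gamma''(v)|\nabla v|^2+\gamma'(v)\Delta v)u$), one then solves the resulting linear uniformly parabolic Neumann problem for $u$ with datum $u_0$ — uniform ellipticity holds because $\gamma(v)\in[\gamma(C(M)),\gamma(0)]$ — obtaining a unique $u=:\Phi(w)$. Standard parabolic estimates show that for $T=T(M)$ small, $\Phi$ maps $\mathcal{S}_T$ into itself and is a contraction in a suitable parabolic Hölder norm (alternatively, existence follows by the Schauder fixed-point theorem using compactness of $\Phi$ into $C^0$, with uniqueness established separately by a Gronwall estimate on the difference of two solutions, controlling $v[w_1]-v[w_2]$ by $w_1-w_2$ through elliptic estimates). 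This yields a unique classical solution on a maximal interval $[0,T_{\mathrm{max}})$, with $(u,v)\in C^{2,1}(\overline\Omega\times(0,T_{\mathrm{max}}))$ by elliptic/parabolic bootstrap and $\gamma\in C^3$.

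For \emph{nonnegativity}, the parabolic comparison principle applied to the linear equation satisfied by $u$ (for which $0$ is a subsolution) and $u_0\geq0$ give $u\geq0$, whence $v\geq0$ by the elliptic maximum principle for $-\Delta v+v=u\geq0$ with Neumann data; the strong maximum principle and Hopf's lemma in fact give $u,v>0$ for $t>0$. For \emph{mass conservation}, integrating the first equation over $\Omega$ and using the divergence theorem together with the no-flux identity $\partial_\nu(\gamma(v)u)=\gamma(v)\partial_\nu u+u\gamma'(v)\partial_\nu v=0$ on $\partial\Omega$ yields $\frac{d}{dt}\int_\Omega u\,dx=0$, hence $\int_\Omega u(\cdot,t)\,dx=\int_\Omega u_0\,dx$; integrating the second equation and using $\int_\Omega\Delta v\,dx=\int_{\partial\Omega}\partial_\nu v=0$ gives $\int_\Omega v(\cdot,t)\,dx=\int_\Omega u(\cdot,t)\,dx=\int_\Omega u_0\,dx$ for all $t\in(0,T_{\mathrm{max}})$.

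For the \emph{continuation criterion}, suppose $T_{\mathrm{max}}<\infty$ and, arguing by contradiction, that $\|u(\cdot,t)\|_{L^\infty(\Omega)}\leq M$ for all $t\in[0,T_{\mathrm{max}})$. The goal is to upgrade this single bound to uniform higher-order bounds up to $t=T_{\mathrm{max}}$ and then restart the local theory of the first step. Elliptic regularity for $-\Delta v+v=u$ gives $\sup_{t<T_{\mathrm{max}}}\|v(\cdot,t)\|_{W^{2,p}(\Omega)}\leq C(p,M)$ for every $p<\infty$, hence $\sup_{t<T_{\mathrm{max}}}\|v(\cdot,t)\|_{C^{1+\beta}(\overline\Omega)}\leq C$ and $v\geq v_*(M)>0$. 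Viewing $u_t=\nabla\cdot(\gamma(v)\nabla u+u\gamma'(v)\nabla v)$ as a divergence-form equation with bounded measurable, uniformly elliptic principal part $\gamma(v)$ and bounded inhomogeneity $u\,\gamma'(v)\nabla v\in L^\infty$, the De Giorgi--Nash--Moser estimate (with Neumann boundary conditions) yields, for any $\tau\in(0,T_{\mathrm{max}})$, a uniform bound $\|u\|_{C^{\alpha,\alpha/2}(\overline\Omega\times[\tau,T_{\mathrm{max}}))}\leq C$ with some $\alpha\in(0,1)$. Feeding this back, $v(\cdot,t)=(I-\Delta)^{-1}u(\cdot,t)$ inherits space--time Hölder regularity with $\Delta v$ now Hölder continuous, so the coefficients in the non-divergence form of the $u$-equation are uniformly Hölder on $\overline\Omega\times[\tau,T_{\mathrm{max}})$; parabolic Schauder estimates then give $\sup_{t\in[2\tau,T_{\mathrm{max}})}\|u(\cdot,t)\|_{C^{2+\alpha}(\overline\Omega)}\leq C$, and elliptic regularity gives $\sup_{t\in[2\tau,T_{\mathrm{max}})}\|v(\cdot,t)\|_{C^{3+\alpha}(\overline\Omega)}\leq C$. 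Hence $(u(\cdot,t),v(\cdot,t))$ converges in $C^2(\overline\Omega)^2$ as $t\nearrow T_{\mathrm{max}}$ to a limit $(u(\cdot,T_{\mathrm{max}}),v(\cdot,T_{\mathrm{max}}))$ with $u(\cdot,T_{\mathrm{max}})\in C^0(\overline\Omega)$, nonnegative, and $\not\equiv0$ by mass conservation; applying the first step with this new datum extends the solution to $[T_{\mathrm{max}},T_{\mathrm{max}}+\delta)$, contradicting maximality. Therefore $\limsup_{t\nearrow T_{\mathrm{max}}}\|u(\cdot,t)\|_{L^\infty(\Omega)}=\infty$ whenever $T_{\mathrm{max}}<\infty$.

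I expect the only genuinely delicate point to be this last bootstrap: the bound $\|u\|_{L^\infty}\leq M$ does not by itself put the coefficient $\gamma'(v)\Delta v$ into a Hölder class, so one cannot invoke parabolic Schauder theory directly; the divergence-form De Giorgi--Nash--Moser estimate is the necessary intermediate step that promotes $u$ to Hölder continuity, after which elliptic regularity makes $\Delta v$ Hölder and the Schauder bootstrap closes. The remaining arguments — the contraction/compactness fixed point, the maximum principles, and the two integral identities — are entirely routine, which is why a detailed write-up can be safely replaced by a reference to the analogous construction in \cite{Anh19}.
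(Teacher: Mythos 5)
Your proposal is correct and follows exactly the route the paper itself indicates: the paper omits the proof of this theorem, stating only that it follows from ``the standard fixed point argument and regularity theory for elliptic/parabolic equations'' as in \cite[Lemma 3.1]{Anh19}, which is precisely the fixed-point-plus-bootstrap scheme you carry out. Your identification of the De Giorgi--Nash--Moser step as the bridge from the $L^\infty$ bound to H\"older regularity of $u$ (needed before Schauder theory can close the extensibility argument) is the right observation and is consistent with the regularity upgrades the paper performs later in Lemma 4.5.
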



A strictly positive uniform-in-time lower bound for $v=(I-\Delta)^{-1}[u](x,t)$ is given in \cite[Lemma 2.2]{Anh19}; see also \cite[Lemma 3.3]{Black}.
	\begin{lemma}\label{lbdv}
		Suppose $(u,v)$  is the classical solution of \eqref{chemo1} up to the maximal time of existence $T_{\mathrm{max}}\in(0,\infty]$. Then, there exists a strictly positive constant $v_*=v_*(n,\Omega,\|u_0\|_{L^1(\Omega)})$ such that for all $t\in(0,T_{\mathrm{max}})$, there holds
		\begin{equation*}
		\inf\limits_{x\in\Omega}v(x,t)\geq v_*.
		\end{equation*}
	\end{lemma}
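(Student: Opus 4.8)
The plan is to exploit the explicit representation $v(\cdot,t)=(I-\Delta)^{-1}u(\cdot,t)$ coming from the second equation of \eqref{chemo1}, combined with the mass conservation $\io u(\cdot,t)\,dx=\io u_0\,dx$ provided by Theorem \ref{local}, and to reduce the assertion to a quantitative positivity property of the associated Green's function, which depends only on $n$ and $\Omega$. Concretely, let $G=G(x,y)$ denote the Green's function of $I-\Delta$ on $\Omega$ subject to the homogeneous Neumann boundary condition, so that for every fixed $t\in(0,T_{\mathrm{max}})$, since $u(\cdot,t)\in C^2(\overline\Omega)$, the classical solution of $-\Delta v+v=u(\cdot,t)$ in $\Omega$, $\dN v=0$ on $\pO$, is represented by
\[
v(x,t)=\io G(x,y)\,u(y,t)\,dy,\qquad x\in\overline\Omega .
\]
As $u(\cdot,t)\ge0$, it therefore suffices to show that $c_0:=\inf_{(x,y)\in\overline\Omega\times\overline\Omega}G(x,y)>0$: then $v(x,t)\ge c_0\io u(y,t)\,dy=c_0\|u_0\|_{L^1(\Omega)}=:v_*$ for all $x\in\overline\Omega$ and $t\in(0,T_{\mathrm{max}})$, with $c_0=c_0(n,\Omega)$ and hence $v_*=v_*(n,\Omega,\|u_0\|_{L^1(\Omega)})$, exactly as asserted.

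The heart of the matter is thus the bound $c_0>0$, which I would establish in two steps. First, \emph{strict positivity off the diagonal}: fixing $y\in\overline\Omega$, the function $G(\cdot,y)$ solves $-\Delta G+G=0$ in $\Omega\setminus\{y\}$ with $\dN G=0$ on $\pO$, and near $y$ it agrees with the fundamental solution of $I-\Delta$ on $\RN$ up to a locally bounded remainder, so that $G(x,y)\sim c_n|x-y|^{2-n}\to+\infty$ as $x\to y$ (recall $n\ge3$), uniformly as $y$ ranges over $\overline\Omega$. Since the zero-order coefficient of $I-\Delta$ is positive (this is essential, the Neumann problem for $-\Delta$ alone not being positivity preserving), the weak maximum principle forces $G\ge0$, and then the strong maximum principle together with Hopf's lemma (applicable up to $\pO$ thanks to the Neumann condition) yields $G>0$ on $(\overline\Omega\times\overline\Omega)\setminus\{x=y\}$. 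Second, \emph{a uniform positive lower bound}: extending $G$ by $+\infty$ on the diagonal, the blow-up just described makes this extension lower semicontinuous and strictly positive on the compact set $\overline\Omega\times\overline\Omega$, so it attains a strictly positive minimum; equivalently one splits $\overline\Omega\times\overline\Omega$ into $\{|x-y|\ge\eps\}$, where continuity and positivity give a positive minimum, and $\{|x-y|<\eps\}$, where $G$ exceeds a large constant once $\eps$ is chosen small. Either way $c_0>0$.

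The step I expect to be the main obstacle is precisely this positivity analysis of $G$ up to the boundary: one must invoke the maximum principle and Hopf's lemma for $I-\Delta$ with Neumann data and, more delicately, control the near-diagonal behaviour of $G$ uniformly up to $\pO$, where reflection terms enter the local parametrix. A slightly more hands-on alternative, which bypasses the Green's function, is to write $v(\cdot,t)=\int_0^\infty e^{-s}e^{s\Delta}u(\cdot,t)\,ds\ge\int_0^1 e^{-s}e^{s\Delta}u(\cdot,t)\,ds$ with $(e^{s\Delta})_{s\ge0}$ the Neumann heat semigroup, and then to invoke the Gaussian lower bound $p_N(s,x,y)\ge\kappa\,s^{-n/2}e^{-K|x-y|^2/s}$ valid for $s\in(0,1]$ on smooth bounded domains; this gives
\[
v(x,t)\ge \|u_0\|_{L^1(\Omega)}\int_0^1 e^{-s}\,\kappa\,s^{-n/2}e^{-K(\mathrm{diam}\,\Omega)^2/s}\,ds=:v_*>0,
\]
the integral converging because of the factor $e^{-K(\mathrm{diam}\,\Omega)^2/s}$ near $s=0$. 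In either approach the constant depends only on $n$, $\Omega$ and $\|u_0\|_{L^1(\Omega)}$, and the conclusion is in the spirit of \cite[Lemma 2.2]{Anh19} and \cite[Lemma 3.3]{Black}.
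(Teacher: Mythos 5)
Your proposal is correct and is essentially the argument the paper delegates to \cite[Lemma 2.2]{Anh19} and \cite[Lemma 3.3]{Black}: combine the representation $v=(I-\Delta)^{-1}u$ and mass conservation with a uniform positive lower bound for the Neumann Green's function (equivalently, the small-time Gaussian lower bound for the Neumann heat kernel via the resolvent formula). The heat-semigroup variant you give at the end is the cleaner route, since it sidesteps the only delicate point in the Green's-function version, namely the uniform control of $G$ near the diagonal up to $\pO$.
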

Next, we recall the following key identity and an explicit point-wise  upper bound estimate for  $v$ \cite[Lemma 3.1]{FJ19a}.
\begin{lemma}\label{keylem1}Assume $n\geq1$ and suppose that $\gamma$ satisfies  $(\mathrm{A0})$.  For any $0<t<T_{\mathrm{max}}$, there holds
	\begin{equation}\label{keyid}
	v_t+\gamma(v)u=(I-\Delta)^{-1}[\gamma(v)u].
	\end{equation}
	Moreover, for  any $x\in\Omega$ and  $t\in[0,T_{\mathrm{max}})$, we have
	\begin{equation}\label{ptesta}
	v(x,t)\leq v_0(x)e^{\gamma(v_*)t}
	\end{equation}with $v_0\triangleq(I-\Delta)^{-1}[u_0].$
\end{lemma}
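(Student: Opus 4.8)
The plan is to establish the two assertions in order: first derive the identity \eqref{keyid} by manipulating the Neumann resolvent $(I-\Delta)^{-1}$, and then combine it with the lower bound of Lemma \ref{lbdv} and the comparison principle for $-\Delta+1$ to obtain the pointwise estimate \eqref{ptesta}.

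For the identity, I would start from the second equation of \eqref{chemo1}, $-\Delta v+v=u$ with $\partial_\nu v=0$, which by definition of the Neumann resolvent means $v=(I-\Delta)^{-1}[u]$ on $(0,T_{\mathrm{max}})$. Since the classical solution satisfies $(u,v)\in C^{2,1}(\overline{\Omega}\times(0,T_{\mathrm{max}}))$ and $(I-\Delta)^{-1}$ is a bounded linear operator acting only in the space variables, it commutes with $\partial_t$, so $v_t=(I-\Delta)^{-1}[u_t]$. Substituting $u_t=\Delta(\gamma(v)u)$ from the first equation and using the elementary identity $\Delta w=-(I-\Delta)w+w$ with $w=\gamma(v)u$, one gets
\[v_t=(I-\Delta)^{-1}\big[-(I-\Delta)(\gamma(v)u)+\gamma(v)u\big]=-\gamma(v)u+(I-\Delta)^{-1}[\gamma(v)u],\]
which is exactly \eqref{keyid}. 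One should note that this interchange is legitimate: $\gamma(v)u$ and $\Delta(\gamma(v)u)$ are continuous up to the boundary on compact subsets of $(0,T_{\mathrm{max}})$, and $\partial_\nu(\gamma(v)u)=0$ on $\partial\Omega$ follows from $\partial_\nu u=\partial_\nu v=0$, so $(I-\Delta)^{-1}$ genuinely inverts $(I-\Delta)$ on $\gamma(v)u$.

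For the pointwise bound, fix $x\in\Omega$ and $t\in(0,T_{\mathrm{max}})$. By Lemma \ref{lbdv}, $v(y,s)\geq v_*$ for all $y\in\Omega$ and $s\in(0,T_{\mathrm{max}})$, and since $\gamma$ is non-increasing on $(0,\infty)$ by $\mathrm{(A0)}$, this gives the pointwise inequality $0\leq\gamma(v)u\leq\gamma(v_*)u$. The operator $(I-\Delta)^{-1}$ with homogeneous Neumann data is order-preserving — if $f\geq0$ then $(I-\Delta)^{-1}f\geq0$, by the weak maximum principle for $-\Delta+1$ together with the Hopf lemma at the boundary — hence
\[(I-\Delta)^{-1}[\gamma(v)u]\leq\gamma(v_*)\,(I-\Delta)^{-1}[u]=\gamma(v_*)\,v.\]
Inserting this into \eqref{keyid} and discarding the non-negative term $\gamma(v)u$ on the left yields the pointwise (in $x$) differential inequality $v_t\leq\gamma(v_*)v$. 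Multiplying by the integrating factor $e^{-\gamma(v_*)t}$ shows that $t\mapsto e^{-\gamma(v_*)t}v(x,t)$ is non-increasing, so $v(x,t)\leq v(x,0)e^{\gamma(v_*)t}=v_0(x)e^{\gamma(v_*)t}$ with $v_0=(I-\Delta)^{-1}[u_0]$; continuity of $v$ up to $t=0$ and up to $\partial\Omega$ then extends the inequality to all $x\in\Omega$ and $t\in[0,T_{\mathrm{max}})$.

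The routine ingredients here are the scalar ODE comparison and the maximum/Hopf principle; the only point that demands a little care is the justification of the operator bookkeeping in the derivation of \eqref{keyid} — namely that $(I-\Delta)^{-1}$ may be commuted past $\partial_t$ and that it inverts $(I-\Delta)$ when applied to $\gamma(v)u$, which is precisely where the $C^{2,1}$ regularity of the classical solution and the compatibility of the Neumann conditions are used. I expect this mild technical verification to be the main obstacle; the remainder of the argument is immediate.
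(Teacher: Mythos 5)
Your proof is correct and follows essentially the same route as the source the paper cites (and sketches in its introduction): the identity \eqref{keyid} is obtained by writing $v=(I-\Delta)^{-1}[u]$, commuting the resolvent with $\partial_t$, and using that $(I-\Delta)^{-1}$ inverts $I-\Delta$ on $\gamma(v)u$ thanks to $\partial_\nu(\gamma(v)u)=0$; the pointwise bound then follows from $(I-\Delta)^{-1}[\gamma(v)u]\leq\gamma(v_*)v$ (Lemma \ref{lbdv}, monotonicity of $\gamma$, and positivity of the resolvent) and a Gronwall argument in $t$ at each fixed $x$. No gaps.
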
	
Last, we recall the following $L^p-L^q$ estimates for the Neumann heat semigroup on bounded domains (see e.g., \cite{Cao,Win10}).
\begin{lemma}\label{lmpq}
	Suppose $\{e^{t\Delta}\}_{t\geq0}$ is the Neumann heat semigroup in $\Omega$, and $\mu_1>0$ denote the first nonzero eigenvalue of $-\Delta$ in $\Omega$ under Neumann boundary conditions. Then there exist $k_1, k_2>0$ which only depend on $\Omega$ such that the following properties hold:
	\begin{enumerate}[(i)]
		\item If $1\leq q\leq p\leq \infty,$ then
		\begin{equation}
		\|e^{t\Delta}w\|_{L^p(\Omega)}\leq k_1(1+t^{-\frac{d}{2}(\frac1q-\frac1p)})e^{-\mu_1t}\|w\|_{L^q(\Omega)}\qquad\text{for all}\;\;t>0
		\end{equation}for all $w\in L^q_0(\Omega)\triangleq\{w\in L^q(\Omega),\;\;\int_\Omega wdx=0\}$;
		\item  If $1<q\leq p\leq \infty,$ then
		\begin{equation}
		\|e^{t\Delta}\nabla \cdot w\|_{L^p(\Omega)}\leq k_2(1+t^{-\frac12-\frac{d}{2}(\frac1q-\frac1p)})e^{-\mu_1 t}\|w\|_{L^q(\Omega)}\quad\text{for all}\;\;t>0
		\end{equation}for any $w\in (W^{1,p}(\Omega))^d.$
	\end{enumerate}
\end{lemma}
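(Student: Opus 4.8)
Both estimates follow by combining Gaussian-type bounds for the Neumann heat kernel of $\Omega$ with the spectral gap $\mu_1>0$, so I would first assemble these ingredients. Write $e^{t\Delta}w(x)=\int_\Omega K(x,y,t)w(y)\,dy$; since $\Omega$ is smooth and bounded, $K\geq0$ and there is a constant $C=C(\Omega)$ such that $K(x,y,t)\leq C(1\wedge t^{-n/2})e^{-|x-y|^2/(Ct)}$ and $|\nabla_x K(x,y,t)|+|\nabla_y K(x,y,t)|\leq C(t^{-1/2}\wedge t^{-(n+1)/2})e^{-|x-y|^2/(Ct)}$ for all $x,y\in\overline\Omega$ and $t>0$. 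I would also record the mass identity $\int_\Omega e^{t\Delta}w\,dx=\int_\Omega w\,dx$, which shows $e^{t\Delta}$ maps $L^q_0(\Omega)$ into itself, and the fact that the eigenfunction expansion gives $\|e^{t\Delta}w\|_{L^2}\leq e^{-\mu_1 t}\|w\|_{L^2}$ whenever $w\in L^2_0(\Omega)$ (mean-zero data being orthogonal to the eigenspace of the eigenvalue $0$).

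On the short-time range $0<t\leq1$ the factor $e^{-\mu_1 t}$ is harmless, since $1\leq e^{\mu_1}e^{-\mu_1 t}$ there, so it suffices to prove the pure smoothing estimates. Integrating the kernel bound against $w$ and applying Young's inequality through $\|K(x,\cdot,t)\|_{L^r_y}\leq Ct^{-\frac n2(1-\frac1r)}$ with $\tfrac1r=1-\tfrac1q+\tfrac1p$ gives $\|e^{t\Delta}w\|_{L^p}\leq Ct^{-\frac n2(\frac1q-\frac1p)}\|w\|_{L^q}$. For (ii), integrating by parts in $y$ (legitimate for $w$ in the dense subclass $(C_0^\infty(\Omega))^n$, then extended by density) moves the derivative onto the kernel, and the gradient bound produces the extra $t^{-1/2}$, i.e. $\|e^{t\Delta}\nabla\cdot w\|_{L^p}\leq Ct^{-\frac12-\frac n2(\frac1q-\frac1p)}\|w\|_{L^q}$.

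On the long-time range $t\geq1$ I would exploit the spectral gap via the splitting $e^{t\Delta}w=e^{\Delta/2}\,e^{(t-1)\Delta}\,e^{\Delta/2}w$ for $w\in L^q_0(\Omega)$: the inner $e^{\Delta/2}$ maps $L^q(\Omega)$ boundedly into $L^2(\Omega)$ (by the short-time smoothing if $q\leq2$, and by $L^q$-boundedness together with $L^q(\Omega)\hookrightarrow L^2(\Omega)$ if $q\geq2$) and preserves the mean-zero property, the middle factor contributes $e^{-\mu_1(t-1)}$ on $L^2_0(\Omega)$, and the outer $e^{\Delta/2}$ maps $L^2(\Omega)$ boundedly into $L^p(\Omega)$ (short-time smoothing if $p\geq2$, trivial embedding if $p\leq2$); composing gives $\|e^{t\Delta}w\|_{L^p}\leq Ce^{-\mu_1 t}\|w\|_{L^q}$. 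The identical three-factor argument applied to $e^{\Delta/2}e^{(t-1)\Delta}[e^{\Delta/2}\nabla\cdot w]$ handles (ii), because for $w\in(C_0^\infty(\Omega))^n$ one has $\int_\Omega\nabla\cdot w\,dx=\int_{\partial\Omega}w\cdot\nu\,d\sigma=0$, so the function fed into the middle factor is again mean-zero, while the short-time estimate of (ii) used once at time $\tfrac12$ absorbs the divergence. Finally I would patch the two ranges: on $0<t\leq1$ keep the power factor, on $t\geq1$ the power is bounded and $e^{-\mu_1 t}$ carries the decay, so in every case the bound has the asserted form with $k$ depending only on $\Omega$; a density argument then removes the restriction $w\in(C_0^\infty(\Omega))^n$ in (ii).

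The main obstacle is the first step: the pointwise upper bounds for $K$, and especially for $\nabla K$, uniformly up to $\partial\Omega$ under Neumann conditions. For $K$ itself, comparison with the whole-space Gaussian via the maximum principle settles small times and the $L^1$-$L^\infty$ contractivity (mass conservation plus boundedness) settles large times; the gradient bound near $\partial\Omega$, however, genuinely requires parabolic boundary Schauder/$L^p$ regularity or a careful parametrix construction, and I would simply quote it from the references cited in the paper. Everything downstream — Young's inequality for the smoothing rates, the spectral-gap decay, the composition across $L^q\to L^2\to L^p$, and attention to the endpoint exponents $p,q\in\{1,\infty\}$ — is routine bookkeeping.
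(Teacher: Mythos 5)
The paper does not prove this lemma; it recalls it as a standard semigroup estimate with the citation ``(see e.g.\ \cite{Cao,Win10})'', so there is no in-paper argument to compare against. Your reconstruction is essentially the proof given in those references: Gaussian kernel bounds plus Young's inequality for the short-time smoothing rates, the $L^2_0$ spectral gap plus the three-factor splitting $e^{\Delta/2}e^{(t-1)\Delta}e^{\Delta/2}$ for the long-time exponential decay, and density to reach rough data. You are right that the genuinely nontrivial input is the pointwise bound on $\nabla_y K$ up to $\partial\Omega$ under Neumann conditions, and you are honest to flag that you would quote it rather than reprove it; that is exactly what the paper does implicitly by citing \cite{Win10}.

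One small point worth making explicit for part (ii): as stated, the estimate with the factor $e^{-\mu_1 t}$ can only hold for the \emph{continuous extension} of the operator $w\mapsto e^{t\Delta}\nabla\cdot w$ from the dense subclass with $w\cdot\nu=0$ on $\partial\Omega$, since for general $w\in (W^{1,p}(\Omega))^n$ one has $\int_\Omega\nabla\cdot w\,dx=\int_{\partial\Omega}w\cdot\nu\,d\sigma$, which need not vanish, and a nonzero spatial mean is preserved by $e^{t\Delta}$ and so cannot decay exponentially. Your density argument should therefore be understood as \emph{defining} the operator on $L^q$ rather than literally approximating an arbitrary $W^{1,p}$ field in $W^{1,p}$ by compactly supported ones (which $C_0^\infty$ does not do). This is exactly the reading in \cite{Win10}, and it is harmless where the lemma is applied in Section 4, because there $w=\nabla\big((\gamma(v)-\gamma_0)u\big)$ and the Neumann conditions $\partial_\nu u=\partial_\nu v=0$ force $w\cdot\nu=0$ on $\partial\Omega$.

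Finally, a typographical note rather than a mathematical one: the exponent $d$ in the lemma as printed is the spatial dimension, i.e.\ $d=n$ in the paper's notation; you correctly wrote $n$ throughout.
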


\section{Time-independent upper bounds of $v$}

In this part, we aim to establish uniform-in-time upper bound for $v$ in higher dimensions when $\gamma$ decreases algebraically at large concentrations. The proof of the above result consists of several steps. To begin with, we introduce a Lyapunov functional.
\begin{lemma}\label{wH1b}
For any $0\leq t<T_{\mathrm{max}}$, there holds
\begin{equation}\label{Lya0}
\frac{1}{2}\frac{d}{dt}\left(\|\nabla v\|^2+\|v\|^2\right)+\int_\Omega \gamma(v)|\Delta v|^2dx+\int_\Omega (\gamma(v)+v\gamma'(v))|\nabla v|^2dx=0.
\end{equation} In particular, under the assumption $\mathrm{(A1)}$, there is $C>0$ depending only on $u_0$ such that
\begin{equation}\label{veh1}
		\sup\limits_{0\leq t<T_{\mathrm{max}}}\bigg(\|\nabla v\|^2+\|v\|^2\bigg)\leq C.
\end{equation}
\end{lemma}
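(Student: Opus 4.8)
The plan is to obtain \eqref{Lya0} by testing the evolution of $v$ against $u=(I-\Delta)v$ and collapsing the resulting nonlocal term via self-adjointness of the Neumann resolvent. First I would differentiate the energy: since $\partial_\nu v=0$, an integration by parts (moving the divergence off $\nabla v_t$, so that no boundary condition on $v_t$ is needed) gives
\[
\frac{1}{2}\frac{d}{dt}\bigl(\|\nabla v\|^2+\|v\|^2\bigr)=\int_\Omega\nabla v\cdot\nabla v_t+\int_\Omega v\,v_t=\int_\Omega(v-\Delta v)\,v_t=\int_\Omega u\,v_t .
\]
Then I would substitute the key identity \eqref{keyid} from Lemma~\ref{keylem1}, namely $v_t=-\gamma(v)u+(I-\Delta)^{-1}[\gamma(v)u]$, to get
\[
\int_\Omega u\,v_t=-\int_\Omega\gamma(v)u^2+\int_\Omega u\,(I-\Delta)^{-1}[\gamma(v)u].
\]

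The key step is the observation that $(I-\Delta)^{-1}$ is self-adjoint in $L^2(\Omega)$ under homogeneous Neumann conditions (equivalently: with $w:=(I-\Delta)^{-1}[\gamma(v)u]$ one has $\int_\Omega uw=\int_\Omega(v-\Delta v)w=\int_\Omega v(w-\Delta w)=\int_\Omega v\gamma(v)u$, using $\partial_\nu w=0$). Hence the last integral equals $\int_\Omega v\gamma(v)u$, which is exactly what turns the nonlocal contribution into a purely local one, and so $\int_\Omega u\,v_t=\int_\Omega\gamma(v)u(v-u)=\int_\Omega\gamma(v)u\,\Delta v$ because $v-u=\Delta v$. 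I would then substitute $u=v-\Delta v$ once more, split $\int_\Omega\gamma(v)u\,\Delta v=\int_\Omega v\gamma(v)\Delta v-\int_\Omega\gamma(v)|\Delta v|^2$, and integrate by parts in the first term, $\int_\Omega v\gamma(v)\Delta v=-\int_\Omega\nabla\bigl(v\gamma(v)\bigr)\cdot\nabla v=-\int_\Omega\bigl(\gamma(v)+v\gamma'(v)\bigr)|\nabla v|^2$, again using $\partial_\nu v=0$. Combining the three displays yields \eqref{Lya0}. For \eqref{veh1}, I would note that under $\mathrm{(A1)}$ both integral terms in \eqref{Lya0} are nonnegative (recall $\gamma>0$), so $t\mapsto\|\nabla v(\cdot,t)\|^2+\|v(\cdot,t)\|^2$ is nonincreasing on $(0,T_{\mathrm{max}})$, and letting $t\to0^+$ bounds it by $\|\nabla v_0\|^2+\|v_0\|^2$ with $v_0=(I-\Delta)^{-1}u_0$; by elliptic regularity this in turn is controlled in terms of $\|u_0\|_{C^0(\overline\Omega)}$ and $\Omega$ only.

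I do not expect a genuine obstacle here: the entire content of the lemma sits in that one algebraic cancellation, and everything else is mechanical. The only points requiring a little care are the regularity needed to justify the computation --- the solution is classical and $C^{2,1}$ on $\overline\Omega\times(0,T_{\mathrm{max}})$, so the differentiation under the integral sign and the integrations by parts are legitimate on $(0,T_{\mathrm{max}})$ --- and the passage $t\to0^+$ used in the last step, for which I would invoke that elliptic regularity applied to $-\Delta v+v=u$ upgrades $u\in C^0(\overline\Omega\times[0,T_{\mathrm{max}}))$ to $v\in C^0\bigl([0,T_{\mathrm{max}});W^{2,p}(\Omega)\bigr)$, so that the energy is continuous up to $t=0$ and the bounding constant indeed depends on $u_0$ (and $\Omega$) alone.
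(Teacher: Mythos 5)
Your proof is correct and arrives at the identity by essentially the same computation as the paper: the paper tests $u_t=\Delta(\gamma(v)u)$ against $v$ and integrates by parts twice, while you compute $\int_\Omega u\,v_t$, substitute the key identity \eqref{keyid}, and invoke self-adjointness of $(I-\Delta)^{-1}$ --- two ways of expressing the same cancellation, both landing on $\int_\Omega\gamma(v)u\,\Delta v$ and then proceeding identically. The concluding observation that the two dissipation terms are nonnegative under $\mathrm{(A1)}$, so the energy is nonincreasing, matches the paper's "integration in time," and your extra remark about elliptic regularity controlling $\|v_0\|_{H^1}$ by $\|u_0\|_{C^0}$ is a harmless and correct clarification the paper leaves implicit.
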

	\begin{proof}
Multiplying the first equation of \eqref{chemo1} by $v$, integrating over $\Omega$ and substituting the second equation of \eqref{chemo1} yields that
\begin{equation*}
	\frac{1}{2}\frac{d}{dt}\left(\|\nabla v\|^2+\|v\|^2\right)=\int_\Omega u\gamma(v)\Delta vdx=\int_\Omega \gamma(v)\Delta v(v-\Delta v)dx.
\end{equation*}
By integration by parts,  there holds
\begin{equation*}
	\begin{split}
	\int_\Omega \gamma(v)v\Delta vdx=-\int_\Omega \nabla v\cdot\nabla (v\gamma(v))dx
	=-\int_\Omega (\gamma(v)+v\gamma'(v))|\nabla v|^2.
	\end{split}
\end{equation*}
Thus, we obtain that
\begin{equation*}
\frac{1}{2}\frac{d}{dt}\left(\|\nabla v\|^2+\|v\|^2\right)+\int_\Omega \gamma(v)|\Delta v|^2dx+\int_\Omega (\gamma(v)+v\gamma'(v))|\nabla v|^2dx=0.
\end{equation*}
Then uniform-in-time estimate \eqref{veh1} follows by integration of above identity with respect to time. This completes the proof.
	\end{proof}
\begin{remark}
	In view of the time-independent lower bound $0<v_*\leq v(x,t)$, one can slightly weaken assumption $\mathrm{(A1)}$ as 
	\begin{equation}
		\gamma(s)+s\gamma'(s)\geq0,\,\qquad\forall\;s\geq v_*.
	\end{equation}
On the other hand, a direct calculation indicates that the above assumption yields
\begin{equation}
	s\gamma(s)\geq v_*\gamma(v_*),\,\qquad\forall\;s\geq v_*
\end{equation}and hence  $\gamma$ fulfills $\mathrm{(A2)}$ with any $k>1$. In particular, if $\gamma(s)=s^{-k}$, assumption $\mathrm{(A1)}$ is satisfied with any $k\leq1.$

\end{remark}

With the above result, we can establish the uniform-in-time upper bounds of $v$ based on a delicate  Alikakos--Moser iteration \cite{Alik79}.  First, we show that
\begin{lemma}\label{lmvbd0pre1}
	Assume $n\geq3$. Suppose $\gamma$ satisfies $\mathrm{(A0)}$ and $\mathrm{(A2)}$ with some $k>0$.  
	Then there exist  $\lambda_1,\lambda_2>0$ independent of time
	such that for any  $p> 1+k$,
	\begin{equation}
	\frac{d}{dt}\int_\Omega v^{p}
+\lambda_2 p\int_\Omega v^p	
	+\frac{\lambda_1 p(p-k-1)}{(p-k)^2}\int_\Omega|\nabla v^{\frac{p-k}{2}}|^2+\lambda_1 p\int_\Omega v^{p-k}
	\leq  2\lambda_2 p\int_\Omega v^p.
	\end{equation}
\end{lemma}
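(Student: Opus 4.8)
The plan is to test the first equation of \eqref{chemo1} against $v^{p-1}$ and exploit the substitution $u=v-\Delta v$ together with the key identity from Lemma \ref{keylem1}. Concretely, I would multiply $u_t=\Delta(\gamma(v)u)$ by $v^{p-1}$, integrate over $\Omega$, and integrate by parts twice so that the right-hand side becomes $\int_\Omega \gamma(v)u\,\Delta(v^{p-1})$. Writing $u=v-\Delta v$ in the left-hand time-derivative term and in this right-hand term, one produces (i) a genuine time derivative $\frac{1}{p}\frac{d}{dt}\int_\Omega v^p$ plus a term involving $\partial_t\nabla v$ that reassembles into $\frac{d}{dt}$-form after another integration by parts, and (ii) a collection of spatial integrals in which the principal good term is $-\,(\text{const})\,\gamma(v)|\nabla v^{(p-k)/2}|^2$-type once we substitute $\Delta(v^{p-1})=(p-1)v^{p-2}\Delta v+(p-1)(p-2)v^{p-3}|\nabla v|^2$ and use the lower bound $\gamma(v)\ge c\,v^{-k}$ valid for $v\ge v_*$, which is exactly what assumption $\mathrm{(A2)}$ (in its quantitative form, via Lemma \ref{lbdv}) supplies on the relevant range. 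The factor $\frac{p(p-k-1)}{(p-k)^2}$ and the restriction $p>1+k$ strongly suggest that the sign of the gradient term is secured precisely by the algebra of completing a square in $\Delta v$ and $v^{-1}|\nabla v|^2$, which forces the coefficient $(p-1)-(p-k-1)\cdot(\text{something})$ to stay positive only when $p-k-1>0$.

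The cleaner route, which I would actually follow, is to start from the variant key identity \eqref{keyidvar}, namely $v_t-\gamma(v)\Delta v+v\gamma(v)=(I-\Delta)^{-1}[u\gamma(v)]$, multiply by $p\,v^{p-1}$ and integrate. The term $-p\int_\Omega \gamma(v)v^{p-1}\Delta v$ integrates by parts to $p(p-1)\int_\Omega \gamma(v)v^{p-2}|\nabla v|^2 + p\int_\Omega \gamma'(v)v^{p-1}|\nabla v|^2$; using $\mathrm{(A0)}$ ($\gamma'\le 0$) this is bounded below, after the change of variables $\nabla v^{(p-k)/2}=\frac{p-k}{2}v^{(p-k)/2-1}\nabla v$ and the bound $\gamma(v)\ge \gamma(v_*)v_*^{k}v^{-k}=:\lambda_1' v^{-k}$, by a positive multiple of $\frac{p(p-k-1)}{(p-k)^2}\int_\Omega|\nabla v^{(p-k)/2}|^2$ — here the $\gamma'$ contribution is absorbed or dropped depending on sign, and the combinatorial coefficient $p-k-1$ emerges from $p(p-1)$ minus the cross term generated when passing to the $v^{(p-k)/2}$ variable. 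The term $p\int_\Omega v^p\gamma(v)\ge \lambda_1 p\int_\Omega v^{p-k}$ gives the fourth term on the left, again by the algebraic lower bound on $\gamma$. For the right-hand side, I would use the comparison estimate recalled in the introduction, $(I-\Delta)^{-1}[u\gamma(v)]\le \gamma(v_*)(I-\Delta)^{-1}[u]=\gamma(v_*)v=:\lambda_2 v$ (this is where Lemma \ref{lbdv} and the monotonicity of $\gamma$ are used), so that $p\int_\Omega v^{p-1}(I-\Delta)^{-1}[u\gamma(v)]\le \lambda_2 p\int_\Omega v^p$; splitting this as $\lambda_2 p\int_\Omega v^p = 2\lambda_2 p\int_\Omega v^p - \lambda_2 p\int_\Omega v^p$ and moving one copy of $\lambda_2 p\int_\Omega v^p$ to the left produces exactly the claimed inequality, with the $2\lambda_2 p\int_\Omega v^p$ left on the right to be handled by interpolation in the subsequent iteration step.

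The main obstacle I anticipate is bookkeeping the gradient term: one must carefully track the two contributions $p(p-1)\int_\Omega\gamma(v)v^{p-2}|\nabla v|^2$ and $p\int_\Omega\gamma'(v)v^{p-1}|\nabla v|^2$, convert the first into the $|\nabla v^{(p-k)/2}|^2$ form with the correct constant $\frac{4}{(p-k)^2}$, and verify that after inserting $\gamma(v)\ge\lambda_1'v^{-k}$ the surviving coefficient is a positive multiple of $\frac{p(p-k-1)}{(p-k)^2}$ rather than $\frac{p(p-1)}{(p-k)^2}$; the drop from $p-1$ to $p-k-1$ must come from somewhere, presumably from the fact that $v^{p-2}\cdot v^{-k}=v^{p-k-2}$ and $|\nabla v^{(p-k)/2}|^2=\frac{(p-k)^2}{4}v^{p-k-2}|\nabla v|^2$, so that $p(p-1)\int\gamma(v)v^{p-2}|\nabla v|^2\ge \lambda_1'\frac{4p(p-1)}{(p-k)^2}\int|\nabla v^{(p-k)/2}|^2$ — and then the stated coefficient $p(p-k-1)$ is simply a (valid, since $p>1+k$) weakening that leaves room to absorb the $\gamma'$ term or a Poincaré-type correction. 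I would also double-check that no boundary terms arise, which is immediate from $\partial_\nu v=0$. Everything else — the $\lambda_2 p\int_\Omega v^p$ dissipation term on the left and its doubled counterpart on the right — is a matter of the algebraic split described above, requiring no estimate beyond $(I-\Delta)^{-1}$ positivity.
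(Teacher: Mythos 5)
Your second (and preferred) route has a genuine gap. When you multiply the variant identity \eqref{keyidvar} by $p\,v^{p-1}$ and integrate by parts on $-p\int_\Omega\gamma(v)v^{p-1}\Delta v\,dx$, you get
\[
p(p-1)\int_\Omega\gamma(v)v^{p-2}|\nabla v|^2\,dx \;+\; p\int_\Omega\gamma'(v)v^{p-1}|\nabla v|^2\,dx,
\]
and the second term is \emph{nonpositive}, since $(\mathrm{A0})$ gives $\gamma'\le0$. Dropping it therefore weakens the left-hand side, which is the wrong direction for the inequality you want; and absorbing it into the first term would require a quantitative control of $|\gamma'|$ by $\gamma$ (something like $(\mathrm{A1})$ or $(\mathrm{A3})$), neither of which is assumed in this lemma. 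Assumptions $(\mathrm{A0})$ and $(\mathrm{A2})$ constrain only the size and asymptotic decay of $\gamma$ itself, not the size of $\gamma'$, so $v\,\gamma'(v)$ can be arbitrarily negative relative to $\gamma(v)$ and the combination $(p-1)\gamma(v)+v\gamma'(v)$ need not be nonnegative. This is exactly the obstacle you flag at the end of your write-up, but the resolution you suggest (``simply a valid weakening that leaves room to absorb the $\gamma'$ term'') does not actually close the gap.

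The paper avoids this entirely by reordering the operations. It multiplies the \emph{original} key identity $v_t+\gamma(v)u=(I-\Delta)^{-1}[\gamma(v)u]$ by $v^{p-1}$, then first applies the pointwise lower bound coming from $(\mathrm{A2})$ and Lemma~\ref{lbdv}, namely $1/\gamma(s)\le bs^k+1/\gamma(s_b)$, to get $\int_\Omega u\gamma(v)v^{p-1}\,dx\ge C\int_\Omega u\,v^{p-k-1}\,dx$ with no derivative of $\gamma$ appearing. Only \emph{after} this does it substitute $u=v-\Delta v$ and integrate by parts, which cleanly produces $\int_\Omega v^{p-k}\,dx+\tfrac{4(p-k-1)}{(p-k)^2}\int_\Omega|\nabla v^{(p-k)/2}|^2\,dx$ with both pieces nonnegative precisely when $p>1+k$. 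In particular, the coefficient $p-k-1$ is not a ``weakening'' of $p-1$ at all; it is the natural exponent that arises because the test power has already been shifted to $v^{p-k-1}$ before the integration by parts. Your treatment of the right-hand side (the comparison bound $(I-\Delta)^{-1}[u\gamma(v)]\le\gamma(v_*)v$ and the split producing the $2\lambda_2 p\int_\Omega v^p$ term) matches the paper, but the handling of the dissipation term needs to be rearranged as above to be correct.
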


\begin{proof} First,  under the our assumption,  we may infer that there exist  $b>0$ and $s_b>v_*$ such that for all $s\geq s_b$
	\begin{equation*}
	1/\gamma(s)\leq bs^k
	\end{equation*}and on the other hand, since $\gamma(\cdot)$ is non-increasing,
	\begin{equation*}
	1/\gamma(s)\leq 1/\gamma(s_b)
	\end{equation*}for all $0\leq s<s_b$.
	Therefore, for all $s\geq0$, there holds
	\begin{equation}\label{cond_gamma}
	1/\gamma(s)\leq bs^{k}+1/\gamma(s_b).
	\end{equation}
	Now, multiplying the key identity \eqref{keyid} by $v^{p-1}$ with some $p>1+k$, we obtain that
	\begin{equation}
	\frac{1}{p}\frac{d}{dt}\int_\Omega v^p+\int_\Omega u\gamma(v)v^{p-1}=\int_\Omega (I-\Delta)^{-1}[u\gamma(v)]v^{p-1}.
	\end{equation}	Since $\gamma(v)\leq \gamma(v_*)$, we deduce by the comparison principle of elliptic equation that 
	\begin{equation*}
		(I-\Delta)^{-1}[u\gamma(v)]\leq \gamma(v_*)v
	\end{equation*}and hence
	\begin{equation*}
		\int_\Omega (I-\Delta)^{-1}[u\gamma(v)]v^{p-1}\leq\gamma(v_*)\int_\Omega v^p.
	\end{equation*}
	Thanks to \eqref{cond_gamma}, it follows   that
	\begin{equation*}
	\begin{split}
	\int_\Omega u\gamma(v)v^{p-1}dx\geq&\int_\Omega u \bigg(bv^{k}+1/\gamma(s_b)\bigg)^{-1}v^{p-1}dx\\
	\geq&C\int_\Omega (v^{k}+1)^{-1}v^{p-1}udx
	\end{split}
	\end{equation*}with $C>0$ independent of $p$ and time.
	Since $v^k\geq v_*^k$ by Lemma \ref{lbdv}, there holds
	\begin{equation}
		\begin{split}
		(v^k+1)^{-1}v^{p-1}\geq(v^k+v_*^{-k}v^k)^{-1}v^{p-1}=\frac{v^{p-k-1}}{1+v_*^{-k}}
		\end{split}
	\end{equation}
from which we deduce that	
		\begin{equation}\label{gammab}
	\begin{split}
	\int_\Omega u\gamma(v)v^{p-1}dx\geq C\int_\Omega v^{p-k-1}udx
	\end{split}
	\end{equation}where $C>0$ may depend on the initial datum, $n,\Omega$ and $\gamma$, but is  independent of $p$ and time.
	Next, recalling that $v-\Delta v=u$, we observe that
	\begin{equation*}
	\begin{split}
	\int_\Omega v^{p-k-1}udx=&\int_\Omega v^{p-k-1}(v-\Delta v)dx\\
	=&\int_\Omega v^{p-k}dx+(p-k-1)\int_\Omega|\nabla v|^2v^{p-k-2}\\
	=&\int_\Omega v^{p-k}dx+\frac{4(p-k-1)}{(p-k)^2}\int_\Omega|\nabla v^{\frac{p-k}{2}}|^2.
	\end{split}
	\end{equation*}
	Therefore, we arrive at
	\begin{equation*}
	\frac{d}{dt}\int_\Omega v^{p}+\frac{\lambda_1 p(p-k-1)}{(p-k)^2}\int_\Omega|\nabla v^{\frac{p-k}{2}}|^2+\lambda_1 p\int_\Omega v^{p-k}\leq  \lambda_2 p\int_\Omega v^p
	\end{equation*}with some $\lambda_1,\lambda_2>0$ independent of $p$ and time. 
This completes the proof by adding $\lambda_2 p\int_\Omega v^p$ to both sides of the above inequality.
\end{proof}

\begin{lemma}\label{lmvbd0pre2}
	Assume $n\geq3$. Suppose $\gamma$ satisfies $\mathrm{(A0)}$ and $\mathrm{(A2)}$ with some $0<k<\frac{4}{n-2}$.  
Let $L>1$ be a generic constant.
There exists $C_0>0$ depending only on the initial datum, $\Omega,k$ and $n$ such that for any $p >q\geq q_*=\frac{2n}{n-2}$ satisfying 
$$
q<p=2q-\frac{nk}{2},
$$ 
there holds
\begin{equation}
\frac{d}{dt}\int_\Omega v^{p}+\lambda_2p\int_\Omega v^p\leq  C_0L^{\frac{n}{2}}p^{\frac{n+2}{2}}\left(\int_\Omega v^q\right)^2.
\end{equation}
\end{lemma}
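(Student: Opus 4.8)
The plan is to insert the differential inequality of Lemma~\ref{lmvbd0pre1} into a Gagliardo--Nirenberg interpolation, so as to absorb its ``bad'' right‑hand side $2\lambda_2 p\int_\Omega v^p$ into the two dissipation terms on the left together with a controlled multiple of $\big(\int_\Omega v^q\big)^2$. Set $w:=v^{\frac{p-k}{2}}$, which is smooth and positive by Lemma~\ref{lbdv} (and $p-k>0$ since $p\ge q_*>1+k$), so that
\[
\int_\Omega v^p=\|w\|_{L^r}^r,\qquad \int_\Omega v^{p-k}=\|w\|_{L^2}^2,\qquad \int_\Omega v^q=\|w\|_{L^s}^s,\qquad \int_\Omega|\nabla v^{\frac{p-k}{2}}|^2=\|\nabla w\|_{L^2}^2,
\]
with $r=\frac{2p}{p-k}>2$ and $s=\frac{2q}{p-k}$. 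A short computation using $q\ge q_*$ and $0<k<\frac4{n-2}$ gives $q>\frac{nk}{2}$, hence $p=2q-\frac{nk}{2}>\frac{nk}{2}$, and in turn $p-k>2$ and $1<s<r<q_*=\frac{2n}{n-2}$. Thus the elementary interpolation $\|w\|_{L^r}\le\|w\|_{L^{q_*}}^\theta\|w\|_{L^s}^{1-\theta}$ with $\frac1r=\frac{\theta}{q_*}+\frac{1-\theta}{s}$, together with the Sobolev embedding $\|w\|_{L^{q_*}}\le C_S\|w\|_{H^1}$ ($n\ge3$, $C_S=C_S(n,\Omega)$), are available and combine to $\|w\|_{L^r}\le C_S^\theta\|w\|_{H^1}^\theta\|w\|_{L^s}^{1-\theta}$.

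The heart of the matter is that the prescribed relation $p=2q-\frac{nk}{2}$ is exactly what makes the exponents come out right. A direct calculation gives $\theta=\frac{n(p-k)}{(n+2)p}$, and then the two relevant quantities are \emph{independent of $p$}:
\[
\theta r=\frac{2n}{n+2}\in(0,2),\qquad \frac{(1-\theta)r}{s}=\frac{4}{n+2}.
\]
Raising the interpolation inequality to the power $r$ and writing $\|w\|_{H^1}^{\theta r}=\big(\|\nabla w\|_{L^2}^2+\|w\|_{L^2}^2\big)^{\theta r/2}$, I would apply Young's inequality to
\[
\big(2\lambda_2 p\,C_S^{\theta r}\big)\cdot\big(\|\nabla w\|_{L^2}^2+\|w\|_{L^2}^2\big)^{\theta r/2}\cdot\|w\|_{L^s}^{(1-\theta)r}
\]
with the conjugate pair $\big(\tfrac{2}{\theta r},\tfrac{2}{2-\theta r}\big)=\big(\tfrac{n+2}{n},\tfrac{n+2}{2}\big)$ (legitimate precisely because $\theta r<2$). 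This produces a term $\delta\big(\|\nabla w\|_{L^2}^2+\|w\|_{L^2}^2\big)$ plus $C(\delta)\big(2\lambda_2 p\,C_S^{\theta r}\big)^{\frac{2}{2-\theta r}}\|w\|_{L^s}^{\frac{2(1-\theta)r}{2-\theta r}}$. Since $\frac{2}{2-\theta r}=\frac{n+2}{2}$, the surviving power of $p$ is exactly $p^{(n+2)/2}$, and since $\frac1s\cdot\frac{2(1-\theta)r}{2-\theta r}=\frac{n+2}{2}\cdot\frac{(1-\theta)r}{s}=2$, we get $\|w\|_{L^s}^{\frac{2(1-\theta)r}{2-\theta r}}=\big(\int_\Omega v^q\big)^2$ — precisely the announced shape.

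It remains to absorb $\delta\big(\|\nabla w\|_{L^2}^2+\|w\|_{L^2}^2\big)$ into the dissipation on the left of Lemma~\ref{lmvbd0pre1}. Because $p-k>2$ for every admissible $p$, one has $\frac{p(p-k-1)}{(p-k)^2}=\frac{p}{p-k}\big(1-\frac1{p-k}\big)>\frac12$, so the gradient coefficient $\frac{\lambda_1 p(p-k-1)}{(p-k)^2}$ exceeds $\frac{\lambda_1}{2}$ and the zeroth‑order coefficient $\lambda_1 p$ exceeds $\lambda_1 q_*\ge\frac{\lambda_1}{2}$; hence the \emph{$p$-independent} choice $\delta=\frac{\lambda_1}{2}$ makes $\delta\big(\|\nabla w\|_{L^2}^2+\|w\|_{L^2}^2\big)$ dominated by $\frac{\lambda_1 p(p-k-1)}{(p-k)^2}\|\nabla w\|_{L^2}^2+\lambda_1 p\|w\|_{L^2}^2$ and keeps $C(\delta)$ a fixed constant. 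Substituting back and cancelling the dissipation yields
\[
\frac{d}{dt}\int_\Omega v^p+\lambda_2 p\int_\Omega v^p\le C(\delta)\big(2\lambda_2 C_S^{\theta r}\big)^{\frac{n+2}{2}}\,p^{\frac{n+2}{2}}\Big(\int_\Omega v^q\Big)^2,
\]
and since $C_S^{\,\theta r\cdot\frac{n+2}{2}}=C_S^{\,n}\le L^{n/2}$ once the generic constant $L>1$ is taken at least as large as $C_S^2$ (and the remaining structural constants), the right‑hand side is $\le C_0\,L^{n/2}p^{(n+2)/2}\big(\int_\Omega v^q\big)^2$, with $C_0$ collecting $\lambda_1,\lambda_2$, the Young constant and the dimension, hence independent of $p$ and $t$.

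The step I expect to be the genuine obstacle is conceptual rather than computational: recognizing that $p=2q-\frac{nk}{2}$ is the unique scaling that \emph{simultaneously} keeps $\theta r$ strictly below the critical threshold $2$ (below which the gradient contribution can still be hidden in the dissipation) and forces the Young‑conjugate exponent to equal $\frac{n+2}{2}$, producing the clean factor $p^{(n+2)/2}$ and the quadratic dependence $\big(\int_\Omega v^q\big)^2$ that the ensuing Alikakos--Moser iteration needs. The rest — verifying $1<s<r<q_*$ (this is where $k<\frac4{n-2}$ enters), and checking that apart from the explicit $p^{(n+2)/2}$ every constant is either $p$-independent or bounded below uniformly for $p\ge q_*$ — is routine bookkeeping.
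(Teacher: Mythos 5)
Your proposal is correct and follows the same basic skeleton as the paper's proof: write $\int_\Omega v^p$ as an $L^p$-interpolant of $\|v^{(p-k)/2}\|_{L^{q_*}}$ and $\int_\Omega v^q$, invoke the Sobolev embedding $H^1\hookrightarrow L^{q_*}$, then Young's inequality, and absorb the resulting $H^1$-term into the dissipation from Lemma~\ref{lmvbd0pre1}. In fact your interpolation exponent $\theta=\tfrac{n(p-k)}{(n+2)p}$ coincides with the paper's $\alpha$ in \eqref{alpha}, and your two clean observations $\theta r=\tfrac{2n}{n+2}$ and $\tfrac{(1-\theta)r}{s}=\tfrac{4}{n+2}$ are exactly what the paper encodes in the block of four exponent identities following \eqref{llll}.

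The one place you genuinely diverge is the absorption step. The paper chooses the Young parameter $\delta$ through \eqref{choise_delta}, tying it to $p$, $L$, and the coefficient $\tfrac{\lambda_1 p(p-k-1)}{(p-k)^2}$; this is how $L$ threads organically into the conjugate Young term and produces the $L^{n/2}$ factor in the conclusion. You instead observe that $p-k>2$ for all admissible $p$ (which uses $p\geq q_*$ and $k<\tfrac{4}{n-2}$), so the dissipation coefficients are bounded below by $\tfrac{\lambda_1}{2}$ uniformly in $p$, allowing a single fixed $\delta=\tfrac{\lambda_1}{2}$. That is cleaner and avoids having to track $\delta$. The cost is that $L^{n/2}$ then has no organic source, and you insert it by requiring $L\geq C_S^2$; but this is innocuous and can be made even cleaner by simply folding $C_S^n$ into $C_0$ (which is allowed since $C_S=C_S(n,\Omega)$), after which $L^{n/2}\geq 1$ for $L>1$ is automatic. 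With that cosmetic change, your bound is actually slightly sharper than the stated one, and the $L^{n/2}$ slack you leave is precisely what Proposition~\ref{Propunbv} later exploits to force $\mathcal{A}_r>1$. All the exponent bookkeeping ($1<s<r<q_*$, $\theta\in(0,1)$, the conjugate pair $(\tfrac{n+2}{n},\tfrac{n+2}{2})$, and the final power $2$ on $\int_\Omega v^q$) checks out.
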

	
\begin{proof} First, one notices that $q_*>1+k$   and  $q_*>\frac{nk}{2}$ provided that $n\geq3$ and $0<k<\frac{4}{n-2}$. Let 
$$
q_*\leq q<p=2q-\frac{nk}{2} = 2q-\frac{kq_*}{q_*-2}.
$$
Denote $\eta=v^{\frac{p-k}{2}}$ and  define
\begin{equation}\label{alpha}
\alpha=\frac{(p-k)(p-q)}{p(p-k-2q/q_*)}.
\end{equation}	
One easily checks that $\alpha\in(0,1)$. Indeed, 
\begin{equation*}
	\begin{split}
	p-k-\frac{2q}{q_*}>&q-\frac{2q}{q_*}-k
=\frac{q_*-2}{q_*}q-k\\
>&\frac{q_*-2}{q_*}\frac{nk}{2}-k=	\frac{q_*-2}{q_*}\frac{kq_*}{q_*-2}-k
=0
	\end{split}
\end{equation*}
and on the other hand, solving $\alpha<1$ yields $p>\frac{kq_*}{q_*-2}=\frac{nk}{2}$. Moreover, since $q>\frac{nk}{2}$ as well,  one checks that  $\frac{2p\alpha}{p-k}<2$.  Then an application of H\"older's inequality yields that
	\begin{equation*}
	\begin{split}
		\int_\Omega v^{p}dx&=\int_\Omega \eta^{\frac{2p}{p-k}}dx=\int_\Omega\eta^{\frac{2p\alpha}{p-k}}\eta^{\frac{2p(1-\alpha)}{p-k}}dx\\
		&\leq\left(\int_\Omega \eta^{q_*}dx\right)^{\frac{2p\alpha}{(p-k)q_*}}\left(\int_\Omega \eta^{\frac{2p(1-\alpha)q_*}{(p-k)q_*-2p\alpha}} \right)^{\frac{(p-k)q_*-2p\alpha}{(p-k)q_*}}\qquad(\text{since}\;\;\frac{2p\alpha}{p-k}<2<q_*)\\
		&=\| \eta\|^{\frac{2p\alpha}{p-k}}_{L^{q_*}(\Omega)}\left(\int_\Omega \eta^{\frac{2q}{p-k}}\right)^{\frac{(p-k)q_*-2p\alpha}{(p-k)q_*}}\\
			&=\| \eta\|^{\frac{2p\alpha}{p-k}}_{L^{q_*}(\Omega)}\left(\int_\Omega v^q\right)^{\frac{(p-k)q_*-2p\alpha}{(p-k)q_*}}.
	\end{split}
	\end{equation*}
Recall the Sobolev embedding inequality 
\begin{equation*}
\|\eta\|_{L^{q_*}(\Omega)}\leq \lambda_*\|\eta\|_{H^1(\Omega)}
\end{equation*}where  $\lambda_*>0$ depends only on $n$ and $\Omega$. Since $\frac{2p\alpha}{p-k}<2$, invoking Young's inequality, we infer that
\begin{equation*}
	\begin{split}
		&\lambda_2p\int_\Omega v^{p}dx\\
	\leq&\lambda_2p\| \eta\|^{\frac{2p\alpha}{p-k}}_{L^{q_*}(\Omega)}\left(\int_\Omega v^{q}\right)^{\frac{(p-k)q_*-2p\alpha}{(p-k)q_*}}\\
	\leq&\lambda_2p\bigg(\lambda_*\| \eta\|_{H^1(\Omega)}\bigg)^{\frac{2p\alpha}{p-k}}\left(\int_\Omega v^{q}\right)^{\frac{(p-k)q_*-2p\alpha}{(p-k)q_*}}\\
	\leq&\frac{p\alpha\delta^{\frac{p-k}{p\alpha}}}{p-k}\|\eta\|^2_{H^1(\Omega)}+\frac{p-k-p\alpha}{p-k}\lambda_*^{\frac{2p\alpha}{p-k-p\alpha}}\left(\delta^{-1}\lambda_2p\right)^{\frac{p-k}{p-k-p\alpha}}\left(\int_\Omega v^{q}\right)^{\frac{(p-k)q_*-2p\alpha}{(p-k-p\alpha)q_*}},
	\end{split}
\end{equation*}	
where $\delta>0$ such that
	\begin{equation}\label{choise_delta}
		\frac{p\alpha\delta^{\frac{p-k}{p\alpha}}}{p-k}=\frac{\lambda_1p(p-k-1)}{2L(p-k)^2}.
	\end{equation} 
	It follows from above and \eqref{alpha} that
\begin{equation}
	\begin{split}
	&\frac{p-k-p\alpha}{p-k}\lambda_*^{\frac{2p\alpha}{p-k-p\alpha}}\left(\delta^{-1}\lambda_2p\right)^{\frac{p-k}{p-k-p\alpha}}\left(\int_\Omega v^{q}\right)^{\frac{(p-k)q_*-2p\alpha}{(p-k-p\alpha)q_*}}\\
	=&\frac{p-k-p\alpha}{p-k}\left(\frac{2L\alpha(p-k)\lambda_*^2}{\lambda_1(p-k-1)}\right)^{\frac{p\alpha}{p-k-p\alpha}}\left(\lambda_2 p\right)^{\frac{p-k}{p-k-p\alpha}}\left(\int_\Omega v^{q}\right)^{\frac{(p-k)q_*-2p\alpha}{(p-k-p\alpha)q_*}}\\
	=&\frac{(q_*-2)q-kq_*}{q_*(p-k)-2q}\left(\frac{2L(p-k)^2(p-q)\lambda_*^2}{\lambda_1 p(p-k-1)(p-k-2q/q_*)}\right)^{\frac{(p-q)q_*}{q(q_*-2)-kq_*}}(\lambda_2p)^{\frac{q_*(p-k)-2q}{(q_*-2)q-kq_*}}\\
	&\times\left(\int_\Omega v^q\right)^{\frac{q_*(p-k)-2p}{q_*(q-k)-2q}}.
	\end{split}
\end{equation}	
Noticing that
\begin{equation*}
\frac{n}{2}=\frac{q_*}{q_*-2},
\end{equation*} and recalling that 
$$
\frac{nk}{2}<q<p=2q-\frac{nk}{2} = 2q-\frac{kq_*}{q_*-2},
$$ 
one easily checks that
 \begin{eqnarray*}
 \frac{q_*(p-k)-2p}{q_*(q-k)-2q}&=&2,\\
\frac{(q_*-2)q-kq_*}{q_*(p-k)-2q}&=&\frac{2}{n+2},\\
\frac{(p-q)q_*}{q(q_*-2)-kq_*}&=&\frac{n}{2},\\
\frac{q_*(p-k)-2q}{(q_*-2)q-kq_*}&=&\frac{n+2}{2},
\end{eqnarray*}
 and
 \begin{equation*}
 	\frac{p-q}{p-k-2q/q_*}=\frac{n}{n+2}.
 \end{equation*}
Moreover, since $p>q_*>1+k$,
\begin{equation}\non
	\begin{split}
\frac{(p-k)^2}{p(p-k-1)}=&\frac{(p-k-1)^2+2(p-k-1)+1}{p(p-k-1)}\\
=&\frac{p-k-1}{p}+\frac{2}{p}+\frac{1}{p(p-k-1)}\\
<&3+\frac{1}{p-k-1}\\
<&3+\frac{1}{q_*-k-1},
	\end{split}
\end{equation}and since $p>\frac{nk}{2}$
\begin{equation}\label{llll}
	\frac{(p-k)^2}{p(p-k-1)}>\frac{p-k}{p}=1-\frac{k}{p}>1-\frac{2}{n}=\frac{n-2}{n}>0.
\end{equation}
Based on  the above calculations, it follows that
\begin{eqnarray*}
\frac{2L(p-k)^2(p-q)\lambda_*^2}{\lambda_1 p(p-k-1)(p-k-2q/q_*)}
&=&\frac{2L\lambda_*^2}{\lambda_1}
\cdot \frac{n}{n+2}
\cdot
\frac{(p-k)^2}{p(p-k-1)}\\
&<&\frac{2Ln\lambda_*^2}{\lambda_1(n+2)}
\left(3+\frac{1}{q_*-k-1}\right).
\end{eqnarray*}
Hence one can find   $C_0>0$ being a constant depending only on the initial datum, $\Omega,k$ and $n$ such that
\begin{eqnarray*}
&&\frac{(q_*-2)q-kq_*}{q_*(p-k)-2q}
\left(\frac{2L(p-k)^2(p-q)\lambda_*^2}{\lambda_1 p(p-k-1)(p-k-2q/q_*)}\right)^{\frac{(p-q)q_*}{q(q_*-2)-kq_*}}
(\lambda_2p)^{\frac{q_*(p-k)-2q}{(q_*-2)q-kq_*}}\\
&<&\frac{2}{n+2} \cdot
\left\{\frac{2Ln\lambda_*^2}{\lambda_1(n+2)}
\left(3+\frac{1}{q_*-k-1}\right)
 \right\}^{\frac{n}{2}}
(\lambda_2p)^{\frac{n+2}{2}} \\
	&\leq& \frac{C_0}{2}L^{\frac{n}{2}}p^{\frac{n+2}{2}}.
\end{eqnarray*}
Therefore by the above and \eqref{choise_delta} we have
\begin{eqnarray*}
		2\lambda_2p\int_\Omega v^{p}dx
	\leq
\frac{\lambda_1p(p-k-1)}{L(p-k)^2}
	\|v^{\frac{p-k}{2}}\|^2_{H^1(\Omega)}
	+ C_0L^{\frac{n}{2}}p^{\frac{n+2}{2}}\left(\int_\Omega v^{q}\right)^{2}.
\end{eqnarray*}	
Combining Lemma \ref{lmvbd0pre1} and recalling $L>1$,
 we obtain the following inequality
\begin{equation}\non
\frac{d}{dt}\int_\Omega v^{p}+\lambda_2p\int_\Omega v^p\leq  C_0L^{\frac{n}{2}}p^{\frac{n+2}{2}}\left(\int_\Omega v^q\right)^2.
\end{equation}
\end{proof}

\begin{proposition}\label{Propunbv}
	Assume $n\geq4$. Suppose $\gamma$ satisfies $\mathrm{(A0)}$, $\mathrm{(A1)}$ and  $\mathrm{(A2)}$ with some $k\in(0,\frac{4}{n-2})$. Then there is $v^*>0$ depending only on the initial datum, $\gamma,n$ and $\Omega$ such that
	\begin{equation}
	\sup\limits_{0\leq t<T_{\mathrm{max}}} \|v(\cdot,t)\|_{L^\infty(\Omega)}\leq v^*.
	\end{equation}
\end{proposition}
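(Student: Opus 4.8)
The plan is to run an Alikakos--Moser iteration using the recursive estimate of Lemma \ref{lmvbd0pre2} along the sequence of exponents $p_m$ defined by the recursion $p_{m+1} = 2p_m - \frac{nk}{2}$ starting from $p_0 = q_* = \frac{2n}{n-2}$. First I would verify that this recursion keeps the hypotheses of Lemma \ref{lmvbd0pre2} satisfied at every step: since $q_* > \frac{nk}{2}$ when $k < \frac{4}{n-2}$, one has $p_{m+1} > p_m$, so $p_m \to \infty$, and in fact $p_m = 2^m(q_* - \frac{nk}{2}) + \frac{nk}{2}$, so $p_m$ grows like $2^m$. The base of the iteration requires a time-independent bound on $\int_\Omega v^{q_*}$, which is exactly what Lemma \ref{wH1b} (estimate \eqref{veh1}) plus the Sobolev embedding $H^1 \hookrightarrow L^{q_*}$ provide under assumption $\mathrm{(A1)}$; note this is where $n \geq 4$ is needed, since for $n=3$ one would have $q_* = 6$ which may fail $q_* > \frac{nk}{2}$ for the full admissible $k$-range, but more to the point the iteration structure wants $q_* \ge 1+k$ comfortably.

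Next I would turn the differential inequality $\frac{d}{dt}\int_\Omega v^{p_{m+1}} + \lambda_2 p_{m+1}\int_\Omega v^{p_{m+1}} \le C_0 L^{n/2} p_{m+1}^{(n+2)/2}\left(\int_\Omega v^{p_m}\right)^2$ into a bound on $\sup_{t} \int_\Omega v^{p_{m+1}}$ via the standard ODE comparison: the solution satisfies $\int_\Omega v^{p_{m+1}}(t) \le \max\{\int_\Omega v_0^{p_{m+1}},\ \frac{C_0 L^{n/2} p_{m+1}^{(n+2)/2}}{\lambda_2 p_{m+1}} \sup_t (\int_\Omega v^{p_m})^2\}$. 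Setting $M_m := \max\{1, \sup_{0 \le t < T_{\max}} \int_\Omega v^{p_m}\}$ and absorbing the initial-data term (which is controlled by $\|v_0\|_{L^\infty}^{p_{m+1}}|\Omega|$, itself bounded since $v_0 = (I-\Delta)^{-1}u_0$ is a fixed function), one gets a recursion of the shape $M_{m+1} \le C_0 L^{n/2}\, p_{m+1}^{n/2}\, M_m^2$ up to harmless constants. Iterating this and taking into account $p_{m+1} \sim 2^{m+1}$, one obtains $M_m^{1/p_m} \le (\text{const})^{\sum 2^{-j}} \cdot (\text{const})^{\sum j 2^{-j}} \cdot M_0^{\text{bounded}}$, and since the geometric-type series $\sum_j 2^{-j}$ and $\sum_j j\,2^{-j}$ converge, the product stays finite. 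Hence $\limsup_{m\to\infty} M_m^{1/p_m} =: v^* < \infty$, and since $\|v(\cdot,t)\|_{L^\infty} = \lim_{m} (\int_\Omega v^{p_m})^{1/p_m} \le \lim_m M_m^{1/p_m}$, this yields the desired time-independent $L^\infty$ bound.

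The main obstacle, and the reason the lemmas were set up so carefully, is tracking the $p$-dependence of the constants through the iteration: the factor $p_{m+1}^{(n+2)/2}$ coming from Lemma \ref{lmvbd0pre2}, after division by $\lambda_2 p_{m+1}$, leaves a factor $p_{m+1}^{n/2}$, and one must check that $\prod_m (p_{m+1}^{n/2})^{1/p_{m+1}}$ converges — this works precisely because $\sum_m \frac{\log p_{m+1}}{p_{m+1}} \sim \sum_m \frac{m}{2^m} < \infty$. A secondary subtlety is that the exponent on $\int_\Omega v^q$ in Lemma \ref{lmvbd0pre2} is exactly $2$ (not $2 + o(1)$), which is what makes the recursion $M_{m+1} \lesssim (\ldots) M_m^2$ clean; this exactness was arranged by the choice $p = 2q - \frac{nk}{2}$ and is why that particular recursion for the exponents is used rather than the more common $p = 2q$. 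I would also note at the start that, by the remark following Theorem \ref{TH1} and Lemma \ref{wH1b}, it suffices to treat $\gamma$ satisfying $\mathrm{(A0)}$, so the singularity issue for $\gamma(s) = s^{-k}$ does not interfere. Finally, combining with Lemma \ref{lbdv} we conclude $0 < v_* \le v(x,t) \le v^*$ uniformly, completing the proof of Proposition \ref{Propunbv}.
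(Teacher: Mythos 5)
Your proposal is correct and follows essentially the same Alikakos--Moser iteration that the paper carries out: same exponent recursion $p_{m+1}=2p_m-\tfrac{nk}{2}$ starting at $p_0=q_*$, same base case via Lemma \ref{wH1b} and the Sobolev embedding $H^1\hookrightarrow L^{q_*}$, same ODE comparison yielding $\mathcal{M}_{m+1}\lesssim p_{m+1}^{n/2}\mathcal{M}_m^2$, and the same observation that the geometric growth of $p_m$ makes $\prod_m p_m^{n/(2p_m)}$ converge. One small correction: your parenthetical explanation of why $n\geq 4$ is assumed is off --- for $n=3$ one has $q_*=6>\tfrac{3k}{2}$ for all $k<4=\tfrac{4}{n-2}$, so both $q_*>\tfrac{nk}{2}$ and $q_*>1+k$ hold just as comfortably; the restriction to $n\geq4$ in the theorem is not forced by this lemma but rather reflects that the $n=3$ case was already covered with a wider range of $k$ by the earlier works cited in the introduction.
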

\begin{proof}
For all $r\in\mathbb{N}$ we define 
$$p_r\triangleq2^{r}(q_*-\frac{nk}{2})+\frac{nk}{2},\qquad p_0=q_*.$$ 
Then $p_r>q_*> \frac{nk}{2}$ and $p_r=2p_{r-1}-\frac{nk}{2}$. 
We apply Lemma \ref{lmvbd0pre2} with $(p,q)=(p_r, p_{r-1})$ to have 
\begin{equation}\non
\frac{d}{dt}\int_\Omega v^{p_r}+ \lambda_2 p_r\int_\Omega v^{p_r}\leq  \lambda_2p_r\mathcal{A}_r
\left(\mathcal{M}_{r-1}  \right)^{2},
\end{equation}
where
\begin{equation*}
\mathcal{M}_{r}\triangleq\sup\limits_{0\leq t< T_{\mathrm{max}}}\int_\Omega v^{p_r}
\quad
\mbox{and}
\quad
	\mathcal{A}_r\triangleq\frac{C_0L^{\frac{n}{2}}p_r^\frac{n}{2}}{\lambda_2}.
\end{equation*}
By solving the above ODE, it follows that for all $r\in\mathbb{N}$
\begin{equation*}
\mathcal{M}_{r} = \sup\limits_{0\leq t< T_{\mathrm{max}}}\int_\Omega v^{p_r}\leq\max\{\mathcal{A}_r\mathcal{M}_{r-1}^2,\| v_0\|_{L^\infty(\Omega)}^{p_r}   \}.
\end{equation*}
Since $p_r\geq q_*$ for all $r\geq1$, one can choose $L>1$ sufficiently large depending only on the initial datum, $\Omega$, $n$ and $k$ such that $\mathcal{A}_r>1$ for all $r\geq1.$ Moreover,  adjusting $C_0$ by a proper larger number, we have
\begin{equation}
	\begin{split}\non
	\mathcal{A}_r	\leq C_0a^{r}
	\end{split}
\end{equation}with some $a>0$ depending only on the initial datum, $\Omega,$ $k$ and $n$. 
In addition, since $\gamma$ satisfies $\mathrm{(A0)}$ and $\mathrm{(A1)}$, due to Lemma \ref{wH1b} and the Sobolev embedding $H^1\hookrightarrow L^{q_*},$ we may find some large constant $K_0>1$  that dominates $\|v_0\|_{L^\infty}$ and $\int_\Omega v^{q_*}$ for all time. 

Iteratively, we deduce that
\begin{equation}
	\begin{split}
	\int_\Omega v^{p_r}\leq&\max\{\mathcal{A}_r\mathcal{A}_{r-1}^2\mathcal{M}^4_{r-2},\mathcal{A}_rK_0^{2p_{r-1}},K_0^{p_r}\}\\
	=&\max\{\mathcal{A}_r\mathcal{A}_{r-1}^2\mathcal{M}^4_{r-2},\mathcal{A}_rK_0^{2p_{r-1}}\}\\
	\leq&\dots\\
	\leq&\max\{\mathcal{A}_r\mathcal{A}_{r-1}^2\mathcal{A}_{r-2}^4\cdots\mathcal{A}_1^{2^{r-1}}\mathcal{M}_0^{2^r},\mathcal{A}_r\mathcal{A}_{r-1}^2\cdots\mathcal{A}_2^{2^{r-2}}K_0^{2^{r-1}p_1}\} \\
	\leq&\max\{\mathcal{A}_r\mathcal{A}_{r-1}^2\mathcal{A}_{r-2}^4\cdots\mathcal{A}_1^{2^{r-1}}K_0^{2^r},\mathcal{A}_r\mathcal{A}_{r-1}^2\cdots\mathcal{A}_2^{2^{r-2}}K_0^{2^{r-1}p_1}\}\\
	\leq& C_0^{2^0+2^1+\cdots+2^{r-1}}
	\times a^{1\cdot r+2(r-1)+2^2(r-2)+\cdots+2^{r-1}(r-(r-1))}\times\tilde{K}_0^{2^r}\\
	=&{ C_0^{2^r-1}} a^{2^{1+r}-r-2}\tilde{K}^{2^r}_0\non
	\end{split}
\end{equation}where $\tilde{K}=\max\{K_0,K_0^{\frac{p_1}{2}}\}$.
Finally, recalling that $p_r= 2^{r}(q_*-\frac{nk}{2})+\frac{nk}{2}$,  we deduce that
\begin{equation}\non
	\|v\|_{L^\infty(\Omega)}\leq\lim\limits_{r\nearrow+\infty}\left({ C_0^{2^r-1} } a^{2^{1+r}-r-2}\tilde{K}^{2^r}_0\right)^{1/p_r}=\left(C_0a^2\tilde{K}_0\right)^{\frac{2}{2q_*-nk}},
\end{equation}
which concludes the proof.
\end{proof}
\begin{corollary}
	If $\gamma(v)=v^{-k}$, then $v$ has a uniform-in-time upper bound provided that $k\leq 1$ when $n=4,5$, or $k<\frac{4}{n-2}$ when $n\geq6$.
\end{corollary}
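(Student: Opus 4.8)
The plan is to read this off directly from Proposition~\ref{Propunbv}: for $\gamma(v)=v^{-k}$ I need only check that the hypotheses $\mathrm{(A0)}$, $\mathrm{(A1)}$ and $\mathrm{(A2)}$ of that proposition are met in the stated ranges of $k$ and $n$ (note $n\geq 4$ in all the listed cases, so the dimensional restriction in Proposition~\ref{Propunbv} is automatic). Since the corollary only asserts an upper bound on $v$, assumption $\mathrm{(A3)}$ plays no role here.

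First I would dispose of the singularity of $v^{-k}$ at the origin exactly as in the remark following Theorem~\ref{TH1}. By Lemma~\ref{lbdv} the component $v$ stays above a strictly positive, time-independent constant $v_*$, so one replaces $s^{-k}$ by a motility $\tilde\gamma\in C^3[0,\infty)$ that is positive, non-increasing, tends to $0$ at infinity, and coincides with $s^{-k}$ for $s\geq v_*/2$; then $(u,v)$ also solves \eqref{chemo1} with $\tilde\gamma$ in place of $\gamma$, and, in view of the weakened forms of the assumptions pointed out in the remark after Lemma~\ref{wH1b}, it suffices to verify $\mathrm{(A1)}$ and $\mathrm{(A2)}$ for $s\geq v_*$, where $\tilde\gamma(s)=s^{-k}$. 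For such $s$ one computes $\gamma(s)+s\gamma'(s)=(1-k)s^{-k}$, so $\mathrm{(A1)}$ holds precisely when $k\leq 1$. Moreover, for any exponent $\kappa>k$ one has $s^{\kappa}\gamma(s)=s^{\kappa-k}\to+\infty$ as $s\to\infty$, so $\mathrm{(A2)}$ is satisfied with exponent $\kappa$; to invoke Proposition~\ref{Propunbv} I need such a $\kappa$ inside $(0,\tfrac{4}{n-2})$, that is, I need the interval $(k,\tfrac{4}{n-2})$ to be nonempty, i.e.\ $k<\tfrac{4}{n-2}$.

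It then remains only to combine these two requirements dimension by dimension. When $n=4$ or $n=5$ one has $\tfrac{4}{n-2}\geq\tfrac{4}{3}>1$, so the active constraint is $k\leq 1$ coming from $\mathrm{(A1)}$, and for any such $k$ one may choose $\kappa\in(k,\tfrac{4}{n-2})$ so that $\mathrm{(A2)}$ holds; Proposition~\ref{Propunbv} then yields the uniform-in-time bound $\|v(\cdot,t)\|_{L^\infty(\Omega)}\leq v^*$. When $n\geq 6$ one has $\tfrac{4}{n-2}\leq 1$, so the requirement $k<\tfrac{4}{n-2}$ already forces $k<1$ and hence $\mathrm{(A1)}$ is automatic, while $\mathrm{(A2)}$ again holds with a suitable $\kappa\in(k,\tfrac{4}{n-2})$, and Proposition~\ref{Propunbv} applies once more. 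I do not expect any analytic obstacle in this argument: the essential content of the corollary is the elementary bookkeeping of which of $\mathrm{(A1)}$ and $\mathrm{(A2)}$ is the binding condition in each range of $n$, the substantive work having already been carried out in Proposition~\ref{Propunbv}.
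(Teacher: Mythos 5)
Your proposal is correct and is precisely the verification that the paper leaves implicit: the corollary is stated immediately after Proposition~\ref{Propunbv} with no further proof, and its content is exactly the bookkeeping you carry out — using the remark following Theorem~\ref{TH1} to replace $s^{-k}$ by a smooth $\tilde\gamma$ agreeing with it for $s\geq v_*/2$, computing $\gamma(s)+s\gamma'(s)=(1-k)s^{-k}$ to see that $\mathrm{(A1)}$ requires $k\leq 1$, noting that $\mathrm{(A2)}$ holds for any exponent $\kappa>k$ so one needs $k<\tfrac{4}{n-2}$, and then observing that $k\leq 1$ is binding when $n=4,5$ while $k<\tfrac{4}{n-2}$ is binding (and forces $k<1$) when $n\geq 6$. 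This matches the paper's intent exactly.
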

Next, we recall the following lemma established in \cite{FJ20a}.
\begin{lemma}\label{lemA23}
	A function satisfying $\mathrm{(A0)}$, $\mathrm{(A1)}$ and  
	\begin{equation}\label{A3c}
	\mathrm{(A3')}:\qquad l|\gamma'(s)|^2\leq \gamma(s)\gamma''(s),\;\;\forall\;s>0
	\end{equation}
	with some $l>1$ must fulfill assumption  $\mathrm{(A2)}$ with any $k>\frac{1}{l-1}$.
\end{lemma}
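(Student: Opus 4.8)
The plan is to recast hypothesis $\mathrm{(A3')}$ as an autonomous differential inequality for the logarithmic derivative of $\gamma$ and then to integrate it twice. Introduce
\[
h(s)\triangleq-\frac{\gamma'(s)}{\gamma(s)}=-\big(\log\gamma\big)'(s),\qquad s>0,
\]
which is well defined and of class $C^{2}$ on $(0,\infty)$ since $\gamma\in C^{3}$ and $\gamma>0$, and which satisfies $h\geq0$ by $\mathrm{(A0)}$. A direct computation gives
\[
h'(s)+(l-1)h(s)^{2}=\frac{-\gamma(s)\gamma''(s)+l\,|\gamma'(s)|^{2}}{\gamma(s)^{2}}\leq0
\]
by $\mathrm{(A3')}$; hence $h'\leq-(l-1)h^{2}\leq0$ because $l>1$, so $h$ is non-increasing on $(0,\infty)$.

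The first point to settle is that $h$ never vanishes, which is what makes the division by $h^{2}$ below legitimate. Indeed, if $h(s_{1})=0$ for some $s_{1}>0$, then monotonicity and nonnegativity of $h$ force $h\equiv0$ on $[s_{1},\infty)$, so $\gamma$ is constant there; this contradicts $\lim_{s\to\infty}\gamma(s)=0$ together with $\gamma>0$. Hence $h(s)>0$ for all $s>0$, and dividing $h'\leq-(l-1)h^{2}$ by $h^{2}$ gives $(1/h)'\geq l-1$ on $(0,\infty)$. Fixing $s_{0}=1$ and integrating from $s_{0}$ to $s$ yields
\[
\frac{1}{h(s)}\geq\frac{1}{h(s_{0})}+(l-1)(s-s_{0})\geq(l-1)(s-s_{0}),\qquad s>s_{0},
\]
that is, $h(s)\leq\dfrac{1}{(l-1)(s-s_{0})}$ for all $s>s_{0}$.

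It then remains to integrate the relation $\big(\log\gamma\big)'=-h$ back up. For $s>2s_{0}$ we obtain
\[
\log\gamma(s)=\log\gamma(2s_{0})-\int_{2s_{0}}^{s}h(\tau)\,d\tau\geq\log\gamma(2s_{0})-\frac{1}{l-1}\log\frac{s-s_{0}}{s_{0}}\geq\log\gamma(2s_{0})-\frac{1}{l-1}\log\frac{s}{s_{0}},
\]
so that $\gamma(s)\geq C_{0}\,s^{-1/(l-1)}$ for all $s\geq2s_{0}$, where $C_{0}=\gamma(2s_{0})\,s_{0}^{1/(l-1)}>0$. Consequently, for any $k>\frac{1}{l-1}$ one has $s^{k}\gamma(s)\geq C_{0}\,s^{\,k-1/(l-1)}\to+\infty$ as $s\to\infty$, which is precisely assumption $\mathrm{(A2)}$; this would complete the proof.

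I do not anticipate a genuine obstacle: the only steps needing a little care are the exclusion of the degenerate case $h\equiv0$ (where the decay property $\gamma(s)\to0$ from $\mathrm{(A0)}$ is used) and the bookkeeping of constants in the second integration. I also note that $\mathrm{(A1)}$ is not actually required for the stated range of $k$, since $\mathrm{(A1)}$ amounts to $h(s)\leq1/s$, which by the same integration gives $\gamma(s)\geq c/s$ and hence $\mathrm{(A2)}$ for every $k>1$; this is already implied by the estimate above when $l\geq2$ and only improves it when $1<l<2$.
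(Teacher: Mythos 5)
Your proof is correct and follows essentially the same route as the paper: derive a first--order differential inequality from $\mathrm{(A3')}$, establish $\gamma'<0$ everywhere using $\mathrm{(A0)}$ and the convexity $\gamma''\geq0$, integrate twice to obtain $\gamma(s)\gtrsim s^{-1/(l-1)}$, and conclude $s^{k}\gamma(s)\to\infty$ for $k>\frac{1}{l-1}$. The only difference is presentational --- the paper integrates $\bigl(\log(-\gamma^{-l}\gamma')\bigr)'\leq 0$ and then $(\gamma^{1-l})'\leq d(l-1)$ with base point $v_*$, while you package the identical estimate as the Riccati inequality $h'+(l-1)h^{2}\leq0$ for $h=-(\log\gamma)'$ and integrate $1/h$; your closing remark that $\mathrm{(A1)}$ is not actually needed is also consistent with the paper, whose proof invokes only $\mathrm{(A0)}$ and $\mathrm{(A3')}$.
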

\begin{proof}
	First, we point out that under the assumptions $\mathrm{(A0)}$ and $\mathrm{(A3')}$, $\gamma'(s)<0$ on $[0,\infty).$ In fact, due to $\mathrm{(A0)}$ and $\mathrm{(A3')}$, we have $\gamma''(s)\geq0$ for all $s>0$. Then if there is $s_1\geq0$ such that $\gamma'(s_1)=0$, it must hold that $0=\gamma'(s_1)\leq \gamma'(s)\leq0$ for all $s\geq s_1$, which contradicts to the positivity of $\gamma$  and the asymptotically vanishing assumption $\mathrm{(A0)}$.
	
	Now, we may divide \eqref{A3c} by $-\gamma(s)\gamma'(s)$ to obtain that
	\begin{equation*}
	-\frac{l\gamma'(s)}{\gamma(s)}\leq -\frac{\gamma''(s)}{\gamma'(s)},\;\;\;\;\forall s>0,
	\end{equation*}
	which indicates that
	\begin{equation*}
	\left(\log(-\gamma^{-l}\gamma')\right)'\leq0.
	\end{equation*}
	An integration of above ODI from $v_*$ to $s$ yields that
	\begin{equation}\non
	-\gamma^{-l}(s)\gamma'(s)\leq-\gamma^{-l}(v_*)\gamma'(v_*)\triangleq d>0,
	\end{equation}which further implies that
	\begin{equation*}
	\left(\frac{1}{(l-1)\gamma^{l-1}(s)}\right)'\leq d.
	\end{equation*}
	Thus for any $s\geq v_*$, there holds
	\begin{equation*}
	\frac{1}{\gamma^{l-1}(s)}\leq d(l-1)(s-v_*)+\frac{1}{\gamma^{l-1}(v_*)}.
	\end{equation*}
	As a result, for any $k>\frac{1}{l-1}$, we have
	\begin{equation}\non
	\frac{1}{[s^{k}\gamma(s)]^{l-1}}\leq \frac{d(l-1)(s-v_*)}{s^{k(l-1)}}+\frac{1}{s^{k(l-1)}\gamma^{l-1}(v_*)}\rightarrow0,\;\;\;\text{as}\;s\rightarrow+\infty.
	\end{equation}This completes the proof.
\end{proof}
\begin{lemma}\label{cor1a}
	Assume that $n\geq4$. Suppose that $\gamma(\cdot)$ satisfies $\mathrm{(A0)}$, $\mathrm{(A1)}$ and  $\mathrm{(A3)}$. Then $v$ has a uniform-in-time upper bound in $\overline{\Omega}\times[0,T_{\mathrm{max}}).$
\end{lemma}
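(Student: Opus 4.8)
The plan is to reduce Lemma~\ref{cor1a} to Proposition~\ref{Propunbv} by showing that assumption $\mathrm{(A3)}$ forces $\gamma$ to fulfill $\mathrm{(A2)}$ for some exponent $k$ lying in the admissible range $(0,\frac{4}{n-2})$. First I would observe that $\mathrm{(A3)}$ is exactly $\mathrm{(A3')}$ with $l=l_0$, and that $l_0>\frac{n+2}{4}\geq\frac32>1$ for $n\geq4$, so $l_0>1$ and Lemma~\ref{lemA23} applies: under $\mathrm{(A0)}$, $\mathrm{(A1)}$, $\mathrm{(A3)}$, the function $\gamma$ satisfies $\mathrm{(A2)}$ with \emph{every} $k>\frac{1}{l_0-1}$.

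Next comes the only computational point, and it is where the hypothesis $l_0>\frac{n+2}{4}$ is used in an essential way. I would note that
\begin{equation*}
	\frac{1}{l_0-1}<\frac{4}{n-2}\iff n-2<4(l_0-1)\iff l_0>\frac{n+2}{4},
\end{equation*}
so the interval $\bigl(\frac{1}{l_0-1},\frac{4}{n-2}\bigr)$ is nonempty. Pick any $k$ in this interval. Then $\gamma$ satisfies $\mathrm{(A0)}$, $\mathrm{(A1)}$ and $\mathrm{(A2)}$ with $k\in(0,\frac{4}{n-2})$, and since $n\geq4$, all hypotheses of Proposition~\ref{Propunbv} are met.

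Finally I would invoke Proposition~\ref{Propunbv} directly: it yields a constant $v^*>0$, depending only on the initial datum, $\gamma$, $n$ and $\Omega$, with $\sup_{0\leq t<T_{\mathrm{max}}}\|v(\cdot,t)\|_{L^\infty(\Omega)}\leq v^*$, which is precisely the claimed uniform-in-time upper bound on $\overline{\Omega}\times[0,T_{\mathrm{max}})$. There is no real obstacle here beyond bookkeeping; the substance of the argument is all contained in Lemma~\ref{lemA23} and Proposition~\ref{Propunbv}, and the role of this lemma is just to verify that the sharp threshold in $\mathrm{(A3)}$ matches the sharp threshold $k<\frac{4}{n-2}$ coming from the Sobolev embedding $H^1\hookrightarrow L^{q_*}$ used in the Alikakos--Moser iteration.
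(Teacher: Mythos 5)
Your proposal is correct and takes essentially the same route as the paper: identify $\mathrm{(A3)}$ as $\mathrm{(A3')}$ with $l=l_0>1$, invoke Lemma~\ref{lemA23} to get $\mathrm{(A2)}$ for every $k>\frac{1}{l_0-1}$, check that $l_0>\frac{n+2}{4}$ makes the interval $\bigl(\frac{1}{l_0-1},\frac{4}{n-2}\bigr)$ nonempty, and conclude via Proposition~\ref{Propunbv}. The paper states this more tersely but the underlying argument is identical, and your arithmetic is right.
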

\begin{proof}
	Note that $\frac{1}{l_0-1}<\frac{4}{n-2}$ since $l_0>\frac{n+2}{4}$. Thus $\gamma$ satisfies $\mathrm{(A2)}$ with some $k<\frac{4}{n-2}$ and due to Proposition \ref{Propunbv}, $v$ has a uniform-in-time upper bound such that $v\leq v^*$ on $\overline{\Omega}\times[0,T_{\mathrm{max}})$.
\end{proof}

\section{Proof of Theorem \ref{TH1}}
\subsection{Uniform-in-time boundedness}
This section is devoted to the proof of Theorem \ref{TH1}. With the time-independent upper bound of $v$ at hand, it suffices to establish an estimation involving a weighted energy $\int_\Omega u^{1+p}\gamma^q(v)$ for some $1+p>\frac{n}{2}$ and $q>0$. Higher-order estimates can be then proved via a standard bootstrapping argument.   To begin with, we show that
\begin{lemma} For any $0\leq t< T_{\mathrm{max}}$ and $p,q>0$, there holds
\begin{equation}
\begin{split}\label{est0}
&\frac{d}{dt}\int_{\Omega}u^{p+1}\gamma^q(v)dx+(p+1)p\int_{\Omega}u^{p-1}\gamma^{q+1}|\nabla u|^2dx\\
&+q\int_{\Omega}\bigg((p+q+1)|\gamma'(v)|^2+\gamma\gamma''\bigg)u^{p+1}\gamma^{q-1}|\nabla v|^2dx\\
&-q\int_{\Omega}(I-\Delta)^{-1}[u\gamma(v)]  u^{p+1}\gamma^{q-1}(v)\gamma'(v)dx\\
=&-(p+1)(p+2q)\int_{\Omega}u^p\gamma^q(v)\gamma'(v)\nabla u\cdot\nabla vdx  -q\int_{\Omega}u^{p+1}\gamma^q(v)\gamma'(v)vdx.
\end{split}
\end{equation}
\end{lemma}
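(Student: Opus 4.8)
The plan is to derive \eqref{est0} by testing the first equation of \eqref{chemo1} against the weight $u^{p}\gamma^{q}(v)$ and then integrating by parts, using the key identity of Lemma \ref{keylem1} to dispose of the resulting $v_{t}$ contribution. Since $(u,v)\in C^{2,1}(\overline\Omega\times(0,T_{\mathrm{max}}))$ and the quantities $u,v,\gamma(v),\gamma'(v),\gamma''(v)$ together with $\nabla u,\nabla v$ are locally bounded on $(0,T_{\mathrm{max}})$, differentiation under the integral sign is legitimate, so
\begin{equation*}
\frac{d}{dt}\int_{\Omega}u^{p+1}\gamma^{q}(v)\,dx=(p+1)\int_{\Omega}u^{p}\gamma^{q}(v)\,u_{t}\,dx+q\int_{\Omega}u^{p+1}\gamma^{q-1}(v)\gamma'(v)\,v_{t}\,dx=:I_{1}+I_{2}.
\end{equation*}

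For $I_{1}$ I would insert $u_{t}=\Delta(\gamma(v)u)$ and integrate by parts; the boundary term vanishes because $\partial_{\nu}(\gamma(v)u)=\gamma'(v)(\partial_{\nu}v)u+\gamma(v)\partial_{\nu}u=0$ on $\partial\Omega$, yielding $I_{1}=-(p+1)\int_{\Omega}\nabla\big(u^{p}\gamma^{q}(v)\big)\cdot\nabla\big(\gamma(v)u\big)\,dx$. Expanding both gradients by the product and chain rules, $\nabla(u^{p}\gamma^{q}(v))=pu^{p-1}\gamma^{q}(v)\nabla u+qu^{p}\gamma^{q-1}(v)\gamma'(v)\nabla v$ and $\nabla(\gamma(v)u)=\gamma(v)\nabla u+u\gamma'(v)\nabla v$, and collecting the three resulting pieces produces the good term $-(p+1)p\int_{\Omega}u^{p-1}\gamma^{q+1}(v)|\nabla u|^{2}$, a cross term $\nabla u\cdot\nabla v$ with coefficient $-(p+1)(p+q)$, and a $|\nabla v|^{2}$ term with coefficient $-(p+1)q$ (times $u^{p+1}\gamma^{q-1}|\gamma'|^{2}$).

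For $I_{2}$ I would route $v_{t}$ through the variant of the key identity, $v_{t}=\gamma(v)\Delta v-v\gamma(v)+(I-\Delta)^{-1}[u\gamma(v)]$, which follows from \eqref{keyid} after substituting $u=v-\Delta v$; this choice is precisely what makes the term $-q\int_{\Omega}u^{p+1}\gamma^{q}(v)\gamma'(v)v$ and the $(I-\Delta)^{-1}$-terms of \eqref{est0} appear. The only term then needing further work is $q\int_{\Omega}u^{p+1}\gamma^{q}(v)\gamma'(v)\Delta v\,dx$, which I integrate by parts once more (again with vanishing boundary contribution since $\partial_{\nu}v=0$), using $\nabla\big(u^{p+1}\gamma^{q}(v)\gamma'(v)\big)=(p+1)u^{p}\gamma^{q}(v)\gamma'(v)\nabla u+u^{p+1}\big(q\gamma^{q-1}(v)|\gamma'(v)|^{2}+\gamma^{q}(v)\gamma''(v)\big)\nabla v$. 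This contributes a further cross term with coefficient $-q(p+1)$ and a further $|\nabla v|^{2}$ term.

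Finally I would add $I_{1}+I_{2}$ and collect like terms: the two cross terms sum to $-(p+1)(p+q)-q(p+1)=-(p+1)(p+2q)$, matching the first term on the right of \eqref{est0}; the $|\nabla v|^{2}$ contributions combine, via $-(p+1)q|\gamma'|^{2}-q^{2}|\gamma'|^{2}-q\gamma\gamma''=-q\big((p+q+1)|\gamma'|^{2}+\gamma\gamma''\big)$ (times $u^{p+1}\gamma^{q-1}$), into the third term on the left; and the good gradient term, the $(I-\Delta)^{-1}$-term, and the $-qu^{p+1}\gamma^{q}\gamma' v$-term account for the remaining pieces. Moving all the non-sign-indefinite terms to the left yields exactly \eqref{est0}. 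There is no genuine analytic obstacle here; the proof is essentially a bookkeeping exercise, the only points meriting care being the justification of differentiation under the integral and of the two integrations by parts (both routine given the $C^{2,1}$ regularity and the homogeneous Neumann conditions), and the decision to express $v_{t}$ through the variant identity rather than \eqref{keyid}, which is what makes the weighted-energy dissipation land on the left-hand side in the stated form.
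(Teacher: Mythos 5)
Your proposal is correct and follows essentially the same route as the paper: the paper multiplies the key identity \eqref{keyid} by $qu^{p+1}\gamma^{q-1}\gamma'$ and then recognizes the resulting $v_t$-term as the missing half of $\frac{d}{dt}\int u^{p+1}\gamma^q$, which is the same bookkeeping you do in reverse by starting from that time derivative and substituting the variant identity \eqref{keyidvar} for $v_t$. Both versions then perform the two integrations by parts (one for $u_t=\Delta(\gamma u)$, one for the $\Delta v$ term) and collect terms to the coefficients $(p+1)(p+2q)$ and $(p+q+1)$, exactly as you describe.
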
 
\begin{proof}
 Multiplying  the key identity \eqref{keyid} by $qu^{p+1}\gamma^{q-1}(v)\gamma'(v)$ with $p,q>0$  and integrating over $\Omega$ yields
\begin{equation}\label{e3}
\begin{split}
\frac{d}{dt}\int_{\Omega}u^{p+1}\gamma^q(v)dx-(p+1)\int_{\Omega}\gamma^q(v)u^pu_tdx-q\int_{\Omega}u^{p+1}\gamma^q(v)\gamma'(v)\Delta vdx\\
-q\int_{\Omega}(I-\Delta)^{-1}[u\gamma(v)]  u^{p+1}\gamma^{q-1}(v)\gamma'(v)dx= -q\int_{\Omega}u^{p+1}\gamma^q(v)\gamma'(v)vdx,
\end{split}
\end{equation}
where we substitute the second equation $-\Delta v+v=u$.

By integration by parts and the first equation of \eqref{chemo1}, we infer that
\begin{equation}
\begin{split}\label{inte0}
&-(p+1)\int_{\Omega}\gamma^q(v)u^pu_tdx\\
=&-(p+1)\int_{\Omega}\gamma^q(v)u^p \Delta (\gamma (v)u)dx\\
=&(p+1)\int_{\Omega}\left(\gamma (v)\nabla u+\gamma'(v)u\nabla v\right)\left(pu^{p-1}\gamma^q(v)\nabla u+qu^p\gamma^{q-1}(v)\gamma'(v)\nabla v\right)dx\\
=&p(p+1)\int_{\Omega}u^{p-1}\gamma^{q+1}(v)|\nabla u|^2dx  +q(p+1)\int_{\Omega}u^{p+1}\gamma ^{q-1}(v)|\gamma'(v)|^2|\nabla v|^2dx\\
&+(p+1)(p+q)\int_{\Omega}u^p\gamma^q(v)\gamma'(v)\nabla u\cdot\nabla vdx.
\end{split}
\end{equation}
Lat,  by integration by parts again, there holds
\begin{equation}\label{e4}
\begin{split}
&-q\int_{\Omega}u^{p+1}\gamma^q(v)\gamma'(v)\Delta vdx\\
=&q^2\int_{\Omega} u^{p+1}\gamma^{q-1} (v)|\gamma'(v)|^2|\nabla v|^2dx+q\int_{\Omega}u^{p+1}\gamma^q\gamma''(v)|\nabla v|^2dx\\
&+q(p+1)\int_{\Omega}u^p\gamma^q (v)\gamma'(v) \nabla u\cdot\nabla vdx.
\end{split}
\end{equation}
This completes the proof by collecting above equalities.
\end{proof}
 \begin{lemma}\label{lmest}
 	Assume that $\gamma(\cdot)$ satisfies $\mathrm{(A0)}$, $\mathrm{(A1)}$ and 
 	$\mathrm{(A3)}$. For any $1+p\in(0,l_0^2)$, there exist
 	time-independent constants $q=\frac{pl_0}{2}>0$ and $\delta_0=\delta_0(p,q)\in(0,1)$ such that
 	\begin{equation}\label{assert0}
 	\frac{(p+1)(p+2q)^2}{4p(1-\delta_0)}\int_\Omega u^{1+p}\gamma^{q-1}|\gamma'|^2|\nabla v|^2\leq q\int_{\Omega}\bigg((p+q+1)|\gamma'(v)|^2+\gamma\gamma''\bigg)u^{p+1}\gamma^{q-1}|\nabla v|^2.
 	\end{equation}
 \end{lemma}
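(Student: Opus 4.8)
The plan is to reduce \eqref{assert0} to a single scalar (numerical) inequality, via a pointwise estimate on the coefficients of the two integrands. Both sides of \eqref{assert0} carry the common nonnegative factor $u^{1+p}\gamma^{q-1}(v)\,|\nabla v|^2$, so it suffices to produce $q>0$ and $\delta_0\in(0,1)$ such that
\begin{equation*}
\frac{(p+1)(p+2q)^2}{4p(1-\delta_0)}\,|\gamma'(s)|^2\leq q\bigg((p+q+1)|\gamma'(s)|^2+\gamma(s)\gamma''(s)\bigg)\qquad\text{for all }s>0,
\end{equation*}
and then to plug $s=v(x,t)$ into this inequality, multiply by $u^{1+p}\gamma^{q-1}(v)|\nabla v|^2\geq0$, and integrate over $\Omega$. (Throughout $p>0$, consistent with the requirement $q=\tfrac{pl_0}{2}>0$; in the intended application $1+p>n/2\geq2$.) This shape of the inequality is exactly what is needed later to absorb, via a Young splitting, the cross term $-(p+1)(p+2q)\int_\Omega u^p\gamma^q(v)\gamma'(v)\nabla u\cdot\nabla v$ appearing in \eqref{est0}.

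Next I would invoke assumption $\mathrm{(A3)}$, namely $\gamma(s)\gamma''(s)\geq l_0|\gamma'(s)|^2$, to bound the right-hand side from below by $q(p+q+1+l_0)|\gamma'(s)|^2$. This eliminates $\gamma''$ entirely (and handles trivially the points where $\gamma'(s)=0$), so that the desired pointwise estimate follows from the purely numerical condition
\begin{equation*}
\frac{(p+1)(p+2q)^2}{4p(1-\delta_0)}\leq q\,(p+q+1+l_0).
\end{equation*}

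The final step is to set $q=\tfrac{pl_0}{2}$, so that $p+2q=p(1+l_0)$, and simplify. A short computation turns the previous inequality into the equivalent form
\begin{equation*}
\frac{(p+1)(1+l_0)^2}{1-\delta_0}\leq l_0\big(p(2+l_0)+2(1+l_0)\big).
\end{equation*}
Expanding both sides, one finds $(p+1)(1+l_0)^2-l_0\big(p(2+l_0)+2(1+l_0)\big)=p+1-l_0^2$, which is strictly negative precisely by the hypothesis $1+p\in(0,l_0^2)$. Hence $\rho:=\dfrac{(p+1)(1+l_0)^2}{l_0\big(p(2+l_0)+2(1+l_0)\big)}\in(0,1)$, and any $\delta_0\in(0,1-\rho]$ works; one may simply take $\delta_0=1-\rho\in(0,1)$, which makes the last displayed inequality an equality and is manifestly time-independent.

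The only genuine subtlety is the choice $q=\tfrac{pl_0}{2}$: it is exactly what forces the $p$-linear and $p$-independent contributions of the reduced scalar inequality to collapse into the single gap $l_0^2-(1+p)$, thereby linking the admissible range of $p$ to the constant $l_0$ of $\mathrm{(A3)}$. Once this choice is made, the remainder is an elementary pointwise Young-type argument with all constants visibly independent of time, so I expect no real obstacle beyond carrying out the bookkeeping cleanly.
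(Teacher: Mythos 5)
Your proposal is correct and follows essentially the same route as the paper: reduce \eqref{assert0} to a pointwise scalar inequality by factoring out the common nonnegative weight $u^{1+p}\gamma^{q-1}|\nabla v|^2$, invoke $\mathrm{(A3)}$ to replace $\gamma\gamma''$ by $l_0|\gamma'|^2$, and then set $q=\tfrac{pl_0}{2}$ so that the numerical condition collapses to the gap $l_0^2-(1+p)>0$. The paper organizes the same computation via the auxiliary function $f(\lambda)=4\lambda l_0-4\lambda^2$ maximized at $\lambda_0=l_0/2$ and chooses $\delta_0$ implicitly as a midpoint, whereas you compute $\rho$ and $\delta_0=1-\rho$ explicitly — a cosmetic difference only.
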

 \begin{proof}
Define \[f(\lambda)=4\lambda l_0-4\lambda^2\] for all $\lambda>0$. We observe that $f(\lambda)$ attains its maximum value $l_0^2$ at $\lambda_0=l_0/2$. 
 Thus, for any $1+p\in(1,l_0^2)$, there holds
 \begin{equation*}
 1+p<l_0^2=f(\lambda_0).
 \end{equation*}In other words,
 \begin{equation*}
 \frac{1+p+4\lambda_0^2}{4\lambda_0}<l_0.
 \end{equation*}
 In addition,  we can further find time-independent $\delta_0=\delta_0(p,\lambda_0)\in(0,1)$ such that
 \begin{equation*}
 \frac{1+p+4\lambda_0^2+4\lambda_0\delta_0(1+p+\lambda_0p)}{4\lambda_0(1-\delta_0)}=\frac12\left(\frac{1+p+4\lambda_0^2}{4\lambda_0}+l_0\right)\in(\frac{1+p+4\lambda_0^2}{4\lambda_0},l_0).
 \end{equation*}
 Thus, we obtain that
 \begin{equation*}
 \frac{1+p+4\lambda_0^2+4\lambda_0\delta_0(1+p+\lambda_0p)}{4\lambda_0(1-\delta_0)}|\gamma'|^2< l_0|\gamma'|^2\leq \gamma\gamma'',\;\;\forall\;s>0
 \end{equation*}
 for any	$1+p\in(1,l_0^2)$. On the other hand, according to Lemma \ref{lbdv} and Lemma \ref{cor1a}, there exist the time-independent lower and upper bounds for $v$
 \begin{equation*}
 v_*\leq v(x,t)\leq v^* \qquad\text{on}\;\;\overline{\Omega}\times [0,T_{\mathrm{max}}).
 \end{equation*} Thus
 we infer that
 \begin{equation*}
 \frac{1+p+4\lambda_0^2+4\lambda_0\delta_0(1+p+\lambda_0p)}{4\lambda_0(1-\delta_0)}|\gamma'(v(x,t))|^2<  \gamma(v(x,t))\gamma''(v(x,t)),\;\;\text{on}\;\overline{\Omega}\times[0,T_{\mathrm{max}})
 \end{equation*}for any	$1+p\in(1,l_0^2)$. Thus,  assertion \eqref{assert0} holds with $q=\lambda_0p$ and $\delta_0$ chosen above.	
 \end{proof}
\begin{lemma}\label{lmuest}
	Assume that $n\geq3$ and $\gamma(\cdot)$ satisfies $\mathrm{(A0)}$, $\mathrm{(A1)}$ and 
	 $\mathrm{(A3)}$. Then  
	 there {exist $p>\frac{n}{2}-1$ and $C>0$} independent of time such that
	\begin{equation*}
		\sup\limits_{0\leq t< T_{\mathrm{max}}}\int_\Omega u^{1+p}\leq C.
	\end{equation*}
\end{lemma}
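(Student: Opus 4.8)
The plan is to turn the weighted-energy identity \eqref{est0} into an absorbing differential inequality for $y(t):=\int_\Omega u^{1+p}\gamma^q(v)\,dx$ and then close it by a Gagliardo--Nirenberg argument. We choose $p$ with $1+p\in(\tfrac n2,l_0^2)$; this interval is nonempty precisely because $l_0>\tfrac{n+2}4$ gives $l_0^2>\tfrac n2$ for $n\ge 3$ (equivalently $(n-2)^2>0$), and any such $p$ satisfies $p>\tfrac n2-1$. The exponent $q$ is then fixed as $q=\tfrac{pl_0}2$, the value furnished by Lemma~\ref{lmest}, and $\delta_0\in(0,1)$ is the associated constant.

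First we dispose of the easy terms in \eqref{est0}. Since $\gamma'\le 0$ by $\mathrm{(A0)}$ and $(I-\Delta)^{-1}[u\gamma(v)]\ge 0$ by the elliptic maximum principle, the term $-q\int_\Omega(I-\Delta)^{-1}[u\gamma(v)]\,u^{p+1}\gamma^{q-1}(v)\gamma'(v)\,dx$ on the left-hand side is nonnegative and is simply discarded. The last right-hand term equals $q\int_\Omega u^{p+1}\gamma^q(v)|\gamma'(v)|v\,dx$; here we use the time-independent two-sided bound $v_*\le v\le v^*$ on $\overline\Omega\times[0,T_{\mathrm{max}})$ --- the lower bound from Lemma~\ref{lbdv} and the upper bound from Lemma~\ref{cor1a}, which is where $\mathrm{(A1)}$ and $\mathrm{(A3)}$ enter --- together with $\gamma\in C^3$, to bound $\gamma^q(v)$, $|\gamma'(v)|$ and $v$ by constants, obtaining $\le C_1\int_\Omega u^{p+1}$ with $C_1$ independent of $t$.

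The crux is the cross term $-(p+1)(p+2q)\int_\Omega u^p\gamma^q(v)\gamma'(v)\,\nabla u\cdot\nabla v\,dx$. We estimate it by Young's inequality, writing the integrand as a product of $\sqrt{(1-\delta_0)(p+1)p}\;u^{(p-1)/2}\gamma^{(q+1)/2}|\nabla u|$ and the complementary factor, chosen so that the two resulting terms are exactly $(1-\delta_0)(p+1)p\int_\Omega u^{p-1}\gamma^{q+1}|\nabla u|^2$ and $\frac{(p+1)(p+2q)^2}{4(1-\delta_0)p}\int_\Omega u^{p+1}\gamma^{q-1}|\gamma'|^2|\nabla v|^2$. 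The first is absorbed into the diffusion term $(p+1)p\int_\Omega u^{p-1}\gamma^{q+1}|\nabla u|^2$, leaving the positive multiple $\delta_0(p+1)p\int_\Omega u^{p-1}\gamma^{q+1}|\nabla u|^2$; the second is precisely the quantity that Lemma~\ref{lmest} bounds above by the coercive term $q\int_\Omega\big((p+q+1)|\gamma'|^2+\gamma\gamma''\big)u^{p+1}\gamma^{q-1}|\nabla v|^2$ present on the left of \eqref{est0} (this term is nonnegative since $\mathrm{(A3)}$ forces $\gamma''\ge0$), so it too is absorbed. This matching is the step I expect to be the main obstacle: the Young parameter must be tuned exactly to the constant in Lemma~\ref{lmest}, and it is this that forces $q=\tfrac{pl_0}2$ and the admissible range $1+p<l_0^2$; the hypothesis $l_0>\tfrac{n+2}4$ is exactly what keeps $1+p>\tfrac n2$ available.

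After these reductions \eqref{est0} yields $\tfrac{d}{dt}y+\delta_0(p+1)p\int_\Omega u^{p-1}\gamma^{q+1}|\nabla u|^2\le C_1\int_\Omega u^{p+1}$. Using $\gamma^{q+1}(v)\ge\gamma(v^*)^{q+1}$ and setting $w:=u^{(p+1)/2}$, the diffusion term controls $c_p\|\nabla w\|_{L^2}^2$ with $c_p>0$ time-independent, while $\int_\Omega u^{p+1}=\|w\|_{L^2}^2$ and $\gamma(v^*)^q\le\gamma^q(v)\le\gamma(v_*)^q$ make $y$ comparable to $\|w\|_{L^2}^2$. Invoking mass conservation $\|u(\cdot,t)\|_{L^1}=\|u_0\|_{L^1}$ and the Gagliardo--Nirenberg inequality --- with, when $p>1$, an elementary H\"older interpolation of $\|u\|_{L^{(p+1)/2}}$ between $L^1$ and $L^{p+1}$ --- one obtains $\|w\|_{L^2}^2\le\varepsilon\|\nabla w\|_{L^2}^2+C_\varepsilon$ with $C_\varepsilon$ independent of $t$. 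Substituting this, and adding a small multiple $\sigma y$ to the left estimated through the same interpolation, produces $\tfrac{d}{dt}y+\sigma y\le C_2$ with $\sigma,C_2>0$ time-independent; an ODE comparison then gives $y(t)\le\max\{y(0),C_2/\sigma\}$, and finally $\int_\Omega u^{1+p}\le\gamma(v^*)^{-q}y\le C$, as claimed.
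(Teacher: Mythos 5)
Your argument is correct and follows essentially the same strategy as the paper: estimate the cross term in \eqref{est0} by Young's inequality with the weight tuned to Lemma~\ref{lmest}, discard the sign-definite resolvent term using $\gamma'\le0$, use the two-sided bound $v_*\le v\le v^*$ to reduce to $\frac{d}{dt}y + c\|\nabla w\|_{L^2}^2 \le C\|w\|_{L^2}^2$, and close with Gagliardo--Nirenberg and mass conservation. The only genuine deviation is the very last step: the paper bounds the residual $\|w\|_{L^1}^2=\big(\int_\Omega u^{(1+p)/2}\big)^2$ by an iterative bootstrap in $p$ (doubling $1+p$ at each stage), whereas you handle it in one shot by interpolating $\|u\|_{L^{(p+1)/2}}$ between $L^1$ and $L^{p+1}$, observing that the resulting power of $\|w\|_{L^2}$ is $2(p-1)/p<2$ and absorbing it by Young. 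Both routes are valid; yours avoids the need to run the lemma repeatedly at increasing exponents and makes the time-independence of the final constant slightly more transparent, at the small cost of the exponent bookkeeping in the interpolation.
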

\begin{proof}
Invoking Young's inequality,  there holds
\begin{equation}
\begin{split}
&-(p+1)(p+2q)\int_{\Omega}u^p\gamma^q(v)\gamma'(v)\nabla u\cdot\nabla vdx\\
&\leq (p+1)p(1-\delta_0)\int_{\Omega}u^{p-1}\gamma^{q+1}|\nabla u|^2dx\\&+\frac{(p+1)(p+2q)^2}{4p(1-\delta_0)}\int_\Omega u^{1+p}\gamma^{q-1}|\gamma'|^2|\nabla v|^2dx
\end{split}
\end{equation}with any $1+p\in(1,l_0^2)$ and $q,\delta_0$ chosen in Lemma \ref{lmest}.

Then it follows from \eqref{est0} and Lemma \ref{lmest} that
\begin{equation}
\begin{split}\label{est2b}
&\frac{d}{dt}\int_{\Omega}u^{p+1}\gamma^q(v)dx+\delta_0(p+1)p\int_{\Omega}u^{p-1}\gamma^{q+1}|\nabla u|^2dx\\
&\;\;-q\int_{\Omega}(I-\Delta)^{-1}[u\gamma(v)]  u^{p+1}\gamma^{q-1}(v)\gamma'(v)dx\\
\leq& -q\int_{\Omega}u^{p+1}\gamma^q(v)\gamma'(v)vdx
\end{split}
\end{equation}with any $1+p\in(1,l_0^2)$ and $q=\frac{pl_0}{2}$.

Since $v_*\leq v\leq v^*$, there is $C>0$ depending on $p$, $\delta_0,$ $\gamma$ and the initial datum only such that for any $1+p\in(1,l_0^2)$ and $q=\frac{pl_0}{2}$
\begin{equation}
\begin{split}\label{est2bu}
\frac{d}{dt}\int_{\Omega}u^{p+1}\gamma^q(v)dx+C\int_{\Omega}u^{p-1}|\nabla u|^2dx
\leq C\int_{\Omega}u^{p+1}dx.
\end{split}
\end{equation}

Recall the Gagliardo-Nirenberg inequality
\begin{equation}\label{emb0}
\|\xi\|_{L^{2}(\Omega)}\leq C\|\nabla \xi\|_{L^2(\Omega)}^{\theta}\|\xi\|_{L^1(\Omega)}^{1-\theta}+C\|\xi\|_{L^1(\Omega)}
\end{equation}
with $\theta=\frac{n}{n+2}$. Denote $\xi=u^{\frac{1+p}{2}}$. Then in view of the uniform-in-time boundedness of $\gamma^q(v)$, we infer by Young's inequality that
\begin{equation}\non
\int_\Omega u^{1+p}\gamma^q(v)\leq C\int_\Omega u^{1+p}= C\|\xi\|_{L^2(\Omega)}^2\leq \e\|\nabla \xi\|^2+C_\e\|\xi\|^2_{L^1(\Omega)}
\end{equation}with any $\e>0$ and some $C>0$ independent of time. Thus, by choosing proper small  $\e>0$, we infer from \eqref{est2bu} that for any $1+p\in(1,l_0^2)$ and $q=\frac{pl_0}{2}$,
\begin{equation}
\begin{split}\label{est2bxi}
\frac{d}{dt}\int_{\Omega}u^{1+p}\gamma^q(v)dx+C\int_\Omega u^{1+p}\gamma^q(v)
\leq C\int_\Omega u^{\frac{1+p}{2}}.
\end{split}
\end{equation}with $C>0$ independent of time. Then in view of the fact $\|u(\cdot,t)\|_{L^1(\Omega)}=\|u_0\|_{L^1(\Omega)}$ together with the uniform-in-time lower and upper boundedness of $\gamma^q(v)$, one can deduce iteratively from \eqref{est2bxi} that for any $1+p\in(1,l_0^2)$ and $q=\frac{pl_0}{2}$, 
\begin{equation}\label{upest}
\sup\limits_{t\geq0}\int_\Omega u^{1+p}dx\leq C.
\end{equation}
 Finally, we note that for any $n\geq3$, 
$$\frac{n}{2}<\left(\frac{n+2}{4}\right)^2<l_0^2.$$ Thus we can always find $p>0$ satisfying  $1+p>\frac{n}{2}$ such that \eqref{upest} holds. This completes the proof.
\end{proof}
\noindent\textbf{Proof of Theorem \ref{TH1}. Boundedness:} Once we obtain lemma \ref{lmuest} with $1+p>\frac{n}{2}$, we can  to deduce the uniform-in-time boundedness of the solutions in the same manner as done in \cite[Lemma 4.3]{Anh19}. We omit the detail here.\qed

\begin{corollary}\label{cor1} Assume   $\gamma(v)=v^{-k}$ and $n\geq4$. Then there exists a unique globally bounded classical solution provided that $k\leq1$ when $n=4,5$, or  $k<\frac{4}{n-2}$ when $n\geq6.$
\end{corollary}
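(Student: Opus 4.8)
The plan is simply to check that the motility $\gamma(s)=s^{-k}$ fits into the framework of Theorem \ref{TH1} under the stated restrictions on $k$, and then to quote that theorem. The only preliminary point is that $s^{-k}$ is singular at $s=0$ and therefore does not satisfy $\mathrm{(A0)}$ literally; following the Remark after Theorem \ref{TH1}, I would first replace $\gamma$ by a function $\tilde\gamma\in C^3[0,\infty)$ that satisfies $\mathrm{(A0)}$ and coincides with $s^{-k}$ on $[v_*/2,\infty)$, where $v_*>0$ is the time-independent lower bound from Lemma \ref{lbdv}. Since $v(x,t)\ge v_*$ throughout $\overline{\Omega}\times[0,T_{\mathrm{max}})$, the classical solution is unaffected by this modification, so it suffices to verify $\mathrm{(A1)}$ and $\mathrm{(A3)}$ for $s\ge v_*$.

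Next I would carry out the elementary computations. From $\gamma'(s)=-ks^{-k-1}$ one gets $\gamma(s)+s\gamma'(s)=(1-k)s^{-k}$, so $\mathrm{(A1)}$ holds (for every $s>0$, in particular for $s\ge v_*$) precisely when $k\le 1$. From $\gamma''(s)=k(k+1)s^{-k-2}$ and $|\gamma'(s)|^2=k^2s^{-2k-2}$ one gets $\gamma(s)\gamma''(s)=k(k+1)s^{-2k-2}=\tfrac{k+1}{k}\,|\gamma'(s)|^2$, so $\mathrm{(A3)}$ holds with any $l_0\le 1+\tfrac1k$, and such an $l_0$ can be taken strictly larger than $\tfrac{n+2}{4}$ if and only if $1+\tfrac1k>\tfrac{n+2}{4}$, i.e. $k<\tfrac{4}{n-2}$.

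Finally I would combine the two conditions. When $n=4$ or $n=5$ we have $\tfrac{4}{n-2}\ge\tfrac43>1$, so the binding requirement is $\mathrm{(A1)}$, i.e. $k\le 1$; when $n\ge 6$ we have $\tfrac{4}{n-2}\le 1$, so the hypothesis $k<\tfrac{4}{n-2}$ already forces $k<1$ and hence $\mathrm{(A1)}$ holds automatically, the binding requirement being $\mathrm{(A3)}$. In every case $\tilde\gamma$ satisfies $\mathrm{(A0)}$, $\mathrm{(A1)}$ and $\mathrm{(A3)}$, so Theorem \ref{TH1} applies and yields a unique, uniformly-in-time bounded global classical solution. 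There is no substantive obstacle here: the statement is a direct specialization, and the only point that needs a moment's care is that the inequality $l_0>\tfrac{n+2}{4}$ in $\mathrm{(A3)}$ is strict, which is what forces the strict bound $k<\tfrac{4}{n-2}$, whereas $\mathrm{(A1)}$ is a closed condition, which is why the borderline value $k=1$ remains admissible exactly in dimensions $n=4,5$.
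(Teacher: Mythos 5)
Your proof is correct and matches the paper's intended argument: the corollary is presented there as a direct specialization of Theorem \ref{TH1}, and the verifications you give — $\gamma(s)+s\gamma'(s)=(1-k)s^{-k}\ge 0$ forcing $k\le1$ for $\mathrm{(A1)}$, and $\gamma\gamma''=(1+\tfrac1k)|\gamma'|^2$ together with the strict inequality $l_0>\tfrac{n+2}{4}$ forcing $k<\tfrac{4}{n-2}$ for $\mathrm{(A3)}$, with the modification $\tilde\gamma$ near $s=0$ justified by the lower bound $v\ge v_*$ — are exactly the computations the paper leaves implicit.
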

\subsection{Exponential stabilization toward constant steady states}
 In this part, we establish the exponential stabilization of the global solutions relying on a slightly modified version of the Lyapunov functional \eqref{Lya0}.
 
\begin{lemma}\label{lmv1decay}
	There exist constants $\alpha>0$ and $C>0$ depending on $u_0,\gamma,n$ and $\Omega$ such that
	\begin{equation}
		\|v(\cdot,t)-\overline{u_0}\|_{W^{1,\infty}(\Omega)}\leq Ce^{-\alpha t},\qquad\forall\;t>0.
	\end{equation}
\end{lemma}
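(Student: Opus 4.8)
The plan is to exploit the modified Lyapunov identity together with the boundedness results already in hand. First I would recall that, since $\overline{v(t)}=\overline{u_0}$ for all $t>0$ (from the mass conservation in Theorem~\ref{local} and the second equation of \eqref{chemo1}), we have $\frac{d}{dt}\int_\Omega v\,dx=0$, so the Lyapunov identity \eqref{Lya0} can be rewritten as
\[
\frac{1}{2}\frac{d}{dt}\left(\|\nabla v\|^2+\|v-\overline{u_0}\|^2\right)+\int_\Omega \gamma(v)|\Delta v|^2dx+\int_\Omega(\gamma(v)+v\gamma'(v))|\nabla v|^2dx=0.
\]
By the uniform-in-time bounds $v_*\le v(x,t)\le v^*$ from Lemma~\ref{lbdv} and Lemma~\ref{cor1a}, the monotone function $\gamma$ satisfies $\gamma(v)\ge\gamma(v^*)>0$ on $\overline\Omega\times[0,T_{\max})$, so the first dissipation term dominates $\gamma(v^*)\|\Delta v\|^2$; assumption $\mathrm{(A1)}$ makes the second dissipation term nonnegative and can be discarded. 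Using the Neumann elliptic regularity estimate $\|\nabla v\|^2\le C\|\Delta v\|\,\|v\|$ or, more directly, the fact that under Neumann conditions $\|\Delta v\|^2\ge \mu_1\|\nabla v\|^2$ and $\|\nabla v\|^2\ge \mu_1\|v-\overline{u_0}\|^2$ (Poincar\'e), we obtain a differential inequality
\[
\frac{d}{dt}\left(\|\nabla v\|^2+\|v-\overline{u_0}\|^2\right)+c_0\left(\|\nabla v\|^2+\|v-\overline{u_0}\|^2\right)\le0,
\]
whence $\|v(\cdot,t)-\overline{u_0}\|_{H^1(\Omega)}\le Ce^{-\alpha_1 t}$ for some $\alpha_1>0$.

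The second stage is to bootstrap this $H^1$-exponential decay up to $W^{1,\infty}$. From the second equation $-\Delta v+v=u$, i.e. $v=(I-\Delta)^{-1}u$, one writes $v-\overline{u_0}=(I-\Delta)^{-1}(u-\overline{u_0})$ because $\overline{u_0}$ is a fixed point of $(I-\Delta)^{-1}$. Thus controlling $\|v-\overline{u_0}\|_{W^{1,\infty}}$ reduces to controlling $\|u-\overline{u_0}\|$ in a suitable norm and applying elliptic $W^{2,p}$/Schauder estimates. To get decay of $u-\overline{u_0}$ I would use the first equation $u_t=\Delta(\gamma(v)u)$ together with the already-established uniform-in-time bounds on $u$ in $L^\infty$ and on $v$ in (at least) $W^{1,\infty}$-type Hölder norms coming from the boundedness proof: parabolic regularity gives uniform bounds on $\|u(\cdot,t)\|_{C^{2+\beta}}$ and $\|v(\cdot,t)\|_{C^{2+\beta}}$ for $t\ge1$. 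Then, since $v-\overline{u_0}$ already decays in $L^2$, interpolation between the decaying $H^1$ norm and the uniformly bounded higher Hölder norm yields $\|v(\cdot,t)-\overline{u_0}\|_{W^{1,\infty}}\le Ce^{-\alpha t}$ with a smaller $\alpha>0$; equivalently, one can use the smoothing of the elliptic operator together with $L^q$ decay of $u-\overline{u_0}$ obtained from testing the $u$-equation and using that $\gamma(v)u-\overline{u_0}\gamma(\overline{u_0})\to0$.

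The main obstacle I expect is the very first reduction: one must be careful that the dissipation rate $c_0$ extracted from the Lyapunov identity is genuinely time-independent, which is exactly why the uniform lower bound $v_*$ and upper bound $v^*$ (hence a time-independent positive lower bound on $\gamma(v)$) are essential — without the boundedness of $v$ the coefficient $\gamma(v)$ could degenerate and the argument would collapse. A secondary technical point is that $\mathrm{(A1)}$ only guarantees $\gamma(v)+v\gamma'(v)\ge0$, so one cannot hope to get extra dissipation from the gradient term; all the exponential decay must be squeezed out of the $\|\Delta v\|^2$ term via the Neumann eigenvalue inequalities, which is fine since $\|\Delta v\|^2\ge\mu_1^2\|v-\overline{u_0}\|^2$ and $\|\Delta v\|^2\ge\mu_1\|\nabla v\|^2$ together dominate the full $H^1$ norm. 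The bootstrap in the second stage is then routine given the uniform higher-order bounds already produced in the boundedness part of Theorem~\ref{TH1}.
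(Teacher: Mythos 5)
Your proposal is correct and, in its first stage, follows essentially the same route as the paper: rewrite the Lyapunov identity \eqref{Lya0} with $\|v-\overline{u_0}\|^2$ in place of $\|v\|^2$ (using $\overline{v}(t)\equiv\overline{u_0}$), bound $\gamma(v)\geq\gamma(v^*)>0$ via the uniform upper bound $v^*$, drop the $(\gamma+v\gamma')|\nabla v|^2$ term by $\mathrm{(A1)}$, and feed $\|\Delta v\|^2\geq\mu_1\|\nabla v\|^2\geq\mu_1^2\|v-\overline{u_0}\|^2$ into the remaining dissipation to close a Gr\"onwall inequality for $\|v-\overline{u_0}\|_{H^1}$. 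You correctly flag that the time-independence of the decay rate hinges entirely on the uniform bound $v^*$ for $\gamma(v)$, which is why this lemma must come after the boundedness part of Theorem~\ref{TH1}.

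For the bootstrap to $W^{1,\infty}$ you hedge between two routes. One is the paper's: test $\partial_t(u-\overline{u_0})=\Delta(\gamma(v)u)$ against $u-\overline{u_0}$ and then against $|u-\overline{u_0}|^{p-2}(u-\overline{u_0})$, absorb the cross term by Young's inequality using the already-decaying $\int_\Omega|\nabla v|^2$ as the forcing, obtain $\|u-\overline{u_0}\|_{L^{p_0}}\leq Ce^{-\alpha_2 t}$ for some $p_0>n$, and invoke elliptic $W^{2,p_0}$-regularity plus $W^{2,p_0}\hookrightarrow W^{1,\infty}$ on $(I-\Delta)(v-\overline{u_0})=u-\overline{u_0}$. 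The other is interpolation of the exponentially decaying $H^1$ (or $L^2$) norm of $v-\overline{u_0}$ against uniformly bounded higher-order norms of $v$; this is also viable, but note that the uniform $C^{2+\beta}$ bounds you cite are only established in the paper in Lemma~\ref{ubound} (which appears after this lemma, though its proof does not use it), and for the interval $t\in(0,1]$ one would finish with the uniform $W^{1,\infty}$ bound on $v-\overline{u_0}$. A simpler variant of your interpolation idea would use only the uniform $W^{2,p}$ bound on $v$ (elliptic regularity from $u\in L^\infty$), interpolating $W^{1,\infty}$ between $H^1$ and $W^{2,p}$ for $p>n$, which avoids any appeal to parabolic Schauder theory. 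Either way the conclusion holds; the paper's route is slightly more self-contained and also sets up the $L^p$-decay of $u-\overline{u_0}$ that is reused in the subsequent $L^\infty$-convergence argument for $u$.
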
 
\begin{proof}
Observing that $\overline{u}(t)=\overline{v}(t)=\overline{u_0}$ for all $t\geq0,$ we infer that
\begin{equation*}
\begin{split}
\frac{d}{dt}\|v(\cdot,t)-\overline{u_0}\|^2=&\frac{d}{dt}\int_\Omega \left(v^2-2\overline{u_0}v+\overline{u_0}^2\right)\\
=&\frac{d}{dt}\int_\Omega v^2-2\overline{u_0}\frac{d}{dt}\int_\Omega vdx\\
=&\frac{d}{dt}\|v(\cdot,t)\|^2.
\end{split}	
\end{equation*} 
Therefore, we deduce from \eqref{Lya0} that
\begin{equation}\label{Lya1}
\frac{1}{2}\frac{d}{dt}\left(\|\nabla v\|^2+\|v-\overline{u_0}\|^2\right)+\int_\Omega \gamma(v)|\Delta v|^2dx+\int_\Omega (\gamma(v)+v\gamma'(v))|\nabla v|^2dx=0.
\end{equation}
Since $v\leq v^*$ and $\gamma$ is non-increasing , we have
\begin{eqnarray*}
\int_\Omega \gamma(v)|\Delta v|^2\geq\gamma(v^*)\int_\Omega |\Delta v|^2
\end{eqnarray*} which together with Poincar\'e's inequality $\mu_1\|v-\overline{u_0}\|^2\leq \|\nabla v\|^2$ yields that
\begin{eqnarray}
\int_\Omega \gamma(v)|\Delta v|^2\geq\mu_1\gamma(v^*)\int_\Omega |\nabla v|^2\geq \frac{\mu_1\gamma(v^*)}{1+\mu_1}\bigg(\|\nabla v\|^2+\|v-\overline{u_0}\|^2\bigg).
\end{eqnarray} Here $\mu_1>0$ denotes the first positive eigenvalue of the Neumann Lapaplacian operator and we also use the fact $\mu_1\|\nabla v\|^2\leq \|\Delta v\|^2$ if $\partial_\nu v=0$ on $\partial\Omega$. 

Thus, we infer from \eqref{Lya1} that
\begin{equation*}
\frac{1}{2}\frac{d}{dt}\left(\|\nabla v\|^2+\|v-\overline{u_0}\|^2\right)+\frac{\mu_1\gamma(v^*)}{1+\mu_1}\bigg(\|\nabla v\|^2+\|v-\overline{u_0}\|^2\bigg)\leq0,
\end{equation*}
which by standard ODI analysis yields that for all $t\geq0$
\begin{equation}\label{vdecay1}
\|\nabla v\|^2+\|v-\overline{u_0}\|^2\leq e^{-\frac{2\mu_1\gamma(v^*)t}{1+\mu_1}}\left(\|\nabla v_0\|^2+\|v_0-\overline{u_0}\|^2\right)
\end{equation}where $v_0=(I-\Delta)^{-1}[u_0]$.

Next, from the first equation of \eqref{chemo1},  we have
\begin{equation}\label{ueqb}
\partial_t(u-\overline{u_0})=\Delta (u\gamma(v)).
\end{equation}
Multiplying \eqref{ueqb} by $u-\overline{u_0}$, we obtain that
\begin{eqnarray*}
	\frac12\frac{d}{dt}\int_\Omega |u-\overline{u_0}|^2dx+\int_\Omega \gamma(v)|\nabla u|^2
	&=&\int_\Omega u\gamma'(v)\nabla u\cdot\nabla vdx\\
	&\leq&\frac{1}{2}\int_\Omega \gamma(v)|\nabla u|^2+\frac{1}{2}\int_\Omega \frac{u^2|\gamma'(v)|^2}{\gamma(v)}|\nabla v|^2.
\end{eqnarray*}
Since now $u,v$ are both uniformly-in-time bounded, there is time-independent constant $C>0$ such that
\begin{equation*}
\frac{d}{dt}\int_\Omega |u-\overline{u_0}|^2dx+C\int_\Omega |\nabla u|^2\leq C\int_\Omega |\nabla v|^2.
\end{equation*}
Applying Poincar\'e's inequality, one can find constant $0<\alpha_1<\frac{2\mu_1\gamma(v^*)}{1+\mu_1}$ depending only on initial datum, $\gamma,n$ and $\Omega$ such that
\begin{equation*}
\frac{d}{dt}\int_\Omega |u-\overline{u_0}|^2dx+\alpha_1\int_\Omega |u-\overline{u_0}|^2\leq C\int_\Omega |\nabla v|^2.
\end{equation*}In view of \eqref{vdecay1}, solving the above differential inequality yields that
\begin{equation}
\|u-\overline{u_0}\|^2\leq Ce^{-\alpha_1t}
\end{equation}with $C$ depending on initial datum, $n,\gamma$ and $\Omega$ only.

Next, for any $p>2$, we multiply \eqref{ueqb} by $|u-\overline{u_0}|^{p-2}(u-\overline{u_0})$ to get that
\begin{eqnarray*}
	&&\frac1p\frac{d}{dt}\int_\Omega |u-\overline{u_0}|^pdx+(p-1)\int_\Omega \gamma(v)|u-\overline{u_0}|^{p-2}|\nabla u|^2\\
	&=&(p-1)\int_\Omega u|u-\overline{u_0}|^{p-2}\gamma'(v)\nabla u\cdot\nabla v\\
	&\leq&\frac{p-1}{2}\int_\Omega \gamma(v)|u-\overline{u_0}|^{p-2}|\nabla u|^2+\frac{p-1}{2}\int_\Omega \frac{u^2|\gamma'(v)|^2}{\gamma(v)}|u-\overline{u_0}|^{p-2}|\nabla v|^2.
\end{eqnarray*}
Similarly, there is time-independent constants $C=C(p)>0$ and $\alpha_2<\alpha_1$ such that
\begin{equation*}
\frac{d}{dt}\int_\Omega |u-\overline{u_0}|^pdx+\alpha_2\int_\Omega|u-\overline{u_0}|^p\leq C\int_\Omega |\nabla v|^2+\alpha_2\int_\Omega|u-\overline{u_0}|^p.
\end{equation*}
Observe that
\begin{equation*}
\int_\Omega|u-\overline{u_0}|^p\leq \|u-\overline{u_0}\|^{p-2}_{L^\infty(\Omega)}\int_\Omega |u-\overline{u_0}|^2dx\leq C\int_\Omega |u-\overline{u_0}|^2dx.
\end{equation*}
We arrive at
\begin{equation}
\frac{d}{dt}\int_\Omega |u-\overline{u_0}|^pdx+\alpha_2\int_\Omega|u-\overline{u_0}|^p\leq C\left(\int_\Omega |\nabla v|^2+\int_\Omega|u-\overline{u_0}|^2\right)
\end{equation}
which yields that
\begin{equation}\label{udecayp}
\int_\Omega |u-\overline{u_0}|^pdx\leq Ce^{-\alpha_2t}.
\end{equation}
Note that from the second equation of \eqref{chemo1}
\begin{equation*}
(v-\overline{u_0})-\Delta(v-\overline{u_0})=u-\overline{u_0}.
\end{equation*}
Choosing some $p_0>n$ in \eqref{udecayp}, one may deduce by elliptic regularity and Sobolev embeddings that
\begin{equation*}
\|v-\overline{u_0}\|_{W^{1,\infty}(\Omega)}\leq C\|v-\overline{u_0}\|_{W^{2,p_0}(\Omega)}\leq C\|u-\overline{u_0}\|_{L^{p_0}(\Omega)}\leq Ce^{-\alpha t}.
\end{equation*}with $\alpha=\frac{\alpha_2}{p_0}.$ This completes the proof.
\end{proof}

Next, we claim that 
\begin{lemma}\label{ubound}
 There are positive constants $C=C(n,\Omega,\gamma,u_0)$ and $\theta\in(0,1)$ such that
\begin{equation}
\|u\|_{C^{2+\theta,1+\frac{\theta}{2}}(\overline{\Omega}\times[t,t+1])}\leq C\qquad\forall\;t\geq1.
\end{equation}
\end{lemma}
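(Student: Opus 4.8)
The plan is to pass to a fixed-size, time-translated cylinder so that every estimate becomes independent of $t$, and then bootstrap between the parabolic equation for $u$ and the elliptic equation for $v$ using the parabolic De Giorgi--Nash--Moser theorem and Schauder theory. By the boundedness just established, $T_{\mathrm{max}}=\infty$ and there are $t$-independent constants such that $0<u\le M$ on $\overline\Omega\times[0,\infty)$ and, by Lemma \ref{lbdv} and Lemma \ref{cor1a}, $v_*\le v\le v^*$ there; moreover, applying elliptic $L^p$ theory to $-\Delta v+v=u$ (with $\partial_\nu v=0$) gives $\sup_{t\ge0}\|v(\cdot,t)\|_{W^{2,p}(\Omega)}\le C(p)$ for all $p<\infty$, hence $\sup_{t\ge0}\|v(\cdot,t)\|_{C^{1,\beta}(\overline\Omega)}\le C(\beta)$ for all $\beta\in(0,1)$. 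Writing the first equation of \eqref{chemo1} in divergence form
\[
u_t=\nabla\cdot\big(\gamma(v)\nabla u+\gamma'(v)\,u\,\nabla v\big),\qquad \partial_\nu u=0\ \text{on}\ \partial\Omega,
\]
we see it is uniformly parabolic (since $\gamma(v^*)\le\gamma(v)\le\gamma(v_*)$) with first-order coefficient $\gamma'(v)\nabla v\in L^\infty$, all of whose norms are $t$-independent; note also that the conormal boundary condition collapses to $\partial_\nu u=0$ because $\partial_\nu v=0$.

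First I would apply the parabolic De Giorgi--Nash--Moser estimate. For each $\tau\ge0$, on the cylinder $\overline\Omega\times[\tau,\tau+2]$ this estimate --- which for divergence-form equations needs only bounded measurable coefficients, uniform ellipticity, the homogeneous Neumann data, and an $L^\infty$ bound on $u$, and which is interior in time, so that the mere continuity of $u_0$ is harmless --- yields $u\in C^{\alpha,\alpha/2}(\overline\Omega\times[\tau+\tfrac12,\tau+2])$ with a bound depending only on $n,\Omega,\gamma(v_*),\gamma(v^*),\|\gamma'(v)\nabla v\|_{L^\infty}$ and $M$, hence only on $n,\Omega,\gamma,u_0$, and not on $\tau$. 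Letting $\tau$ range over $[0,\infty)$ gives a uniform bound for $u$ in $C^{\alpha,\alpha/2}(\overline\Omega\times[\tfrac12,\infty))$ for some $\alpha\in(0,1)$, in particular $\sup_{t\ge1}\|u\|_{C^{\alpha,\alpha/2}(\overline\Omega\times[t,t+1])}\le C$.

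Next I would bootstrap to Schauder regularity. Since now $u(\cdot,t)$ is bounded in $C^{\alpha}(\overline\Omega)$ uniformly in $t$, elliptic Schauder theory for $-\Delta v+v=u$ on the smooth domain $\Omega$ gives $\sup_{t\ge1}\|v(\cdot,t)\|_{C^{2,\alpha}(\overline\Omega)}\le C$; to control $v$ in the time variable I would use the key identity \eqref{keyid}, i.e. $v_t=(I-\Delta)^{-1}[\gamma(v)u]-\gamma(v)u$, observing that $\gamma(v)u\in C^{\alpha,\alpha/2}$ uniformly (as $\gamma\in C^3$ and $v,u$ are already space-time Hölder), while $(I-\Delta)^{-1}$ gains two spatial derivatives and preserves the temporal modulus of continuity, so that $v$, $\nabla v$ and $\Delta v=v-u$ all lie in $C^{\alpha',\alpha'/2}$ uniformly in $t$ for some $0<\alpha'\le\alpha$. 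Rewriting the $u$-equation in non-divergence form
\[
u_t=\gamma(v)\Delta u+2\gamma'(v)\nabla v\cdot\nabla u+\big(\gamma''(v)|\nabla v|^2+\gamma'(v)(v-u)\big)u,
\]
all of its coefficients then belong to $C^{\alpha',\alpha'/2}(\overline\Omega\times[t-\tfrac12,t+1])$ with $t$-independent norms, and the interior-in-time parabolic Schauder estimate (with the homogeneous Neumann condition on the smooth boundary $\partial\Omega$) yields $\|u\|_{C^{2+\alpha',1+\frac{\alpha'}{2}}(\overline\Omega\times[t,t+1])}\le C$ uniformly in $t\ge1$; taking $\theta=\alpha'$ finishes the proof.

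I expect the main obstacle to be the \emph{uniform-in-time} Hölder continuity of $v$ in the time variable: the equation for $v$ is elliptic in space and offers no parabolic smoothing, so this regularity must be extracted from the key identity \eqref{keyid} rather than from an off-the-shelf parabolic estimate. Everything else amounts to keeping all constants $t$-independent, which is precisely what the time-translation onto a fixed-size cylinder --- combined with the interior-in-time character of both the De Giorgi--Nash--Moser and the Schauder estimates, so that no initial-data norm ever enters --- is designed to achieve.
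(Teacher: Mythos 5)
Your proof is correct and follows essentially the same bootstrap scheme as the paper: pass to fixed-size cylinders, get space--time H\"older continuity of $u$ via an interior-in-time De~Giorgi--Nash--Moser estimate (the paper cites \cite[Lemma 5.1]{Anh19} for this step), upgrade the regularity of $v$ using the key identity, and finish with parabolic Schauder on the equation for $u$. The one genuine difference is how you obtain the time-H\"older regularity of $v$, $\nabla v$ and $\Delta v$ that the Schauder step needs. The paper rewrites the key identity in the form $v_t-\gamma(v)\Delta v+v\gamma(v)=(I-\Delta)^{-1}[u\gamma(v)]$ and treats it as a uniformly parabolic equation for $v$ with $C^{\alpha,\alpha/2}$ coefficients and source, so a single application of parabolic Schauder delivers $v\in C^{2+\theta_3,1+\theta_3/2}$ in one shot. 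You instead keep the elliptic view of $v$, get $v(\cdot,t)\in C^{2,\alpha}(\overline\Omega)$ uniformly by elliptic Schauder, and extract time regularity from $v_t=(I-\Delta)^{-1}[\gamma(v)u]-\gamma(v)u$ together with $\Delta v=v-u$. This works, but as written the claim that ``$v$, $\nabla v$ and $\Delta v$ all lie in $C^{\alpha',\alpha'/2}$'' is slightly compressed for $\nabla v$: $v_t\in C^{\alpha,\alpha/2}$ gives the temporal modulus of $v$ and $\Delta v=v-u$ handles $\Delta v$, but the temporal H\"older bound for $\nabla v$ is not immediate from the key identity alone and requires a (standard but worth stating) interpolation step between the uniform $C^{2,\alpha}$ spatial bound and the uniform $C^{\alpha/2}$ temporal bound on $v$, or alternatively adopting the paper's parabolic reformulation which sidesteps this entirely. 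With that one sentence added, your argument is complete and gives the same conclusion with the same quality of constants.
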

\begin{proof}
Since $\|u\|_{L^\infty(\Omega)}$ and $\|v\|_{W^{1,\infty}(\Omega)}$ are now uniform-in-time bounded, one infers from the key identity \eqref{keyid} and the second equation of \eqref{chemo1} that $v\in W^{2,1}_p(\overline{\Omega}\times[t,t+1])$ with any $p>\frac{n+2}{2} $ for any $t>0$. Then by the Sobolev embedding theorem, there exist $\theta_1\in(0,2-\frac{n+2}{p}]$ and time-independent constant $C>0$ such that \begin{equation}
\|v\|_{C^{\theta_1,\frac{\theta_1}{2}}(\overline{\Omega}\times[t,t+1])}\leq C\|v\|_{W^{2,1}_p(\overline{\Omega}\times[t,t+1])}\leq C\qquad\forall\;t>0.
\end{equation}
On the other hand, in the same manner as \cite[Lemma 5.1]{Anh19}, there exists time-independent constant $C>0$ such that \begin{equation*}
\|u\|_{C^{\theta_2,\frac{\theta_2}{2}}(\overline{\Omega}\times[t,t+1])}\leq C\qquad\forall\;t\geq1
\end{equation*}with some $\theta_2\in(0,1)$. 

Then in view  of the following variant form of the key identity:
\begin{equation*}
v_t-\gamma(v)\Delta v+v\gamma(v)=(I-\Delta)^{-1}[u\gamma(v)],
\end{equation*} we can further deduce by standard Schauder's theory for parabolic equations that with some $\theta_3\in(0,1)$
\begin{equation*}
\|v\|_{C^{2+\theta_3,1+\frac{\theta_3}{2}}(\overline{\Omega}\times[t,t+1])}\leq C\qquad\forall\;t\geq1.
\end{equation*}In turn, we may finally deduce from the equation for $u$ by Schauder's theory that 
\begin{equation*}
\|u\|_{C^{2+\theta,1+\frac{\theta}{2}}(\overline{\Omega}\times[t,t+1])}\leq C\qquad\forall\;t\geq1.
\end{equation*}
\end{proof}
With above preparations, we are now ready to prove the exponentially decay of $\|u-\overline{u_0}\|_{L^\infty}$. Denoting $w=u-\overline{u_0}$ and $\gamma_0=\gamma(\overline{u_0})$, by the semigroup theory, we infer from \eqref{ueqb} that for any $t>\tau_0\geq1,$
\begin{equation}
	w(t)=e^{\gamma_0\Delta t}w(\tau_0)+\int_{\tau_0}^t e^{\gamma_0\Delta (t-s)}\Delta((\gamma(v(s)-\gamma_0)u(s))ds.
\end{equation}
As a result, we deduce by Lemma \ref{lmpq} that
\begin{equation*}
	\begin{split}
	\|w(t)\|_{L^\infty(\Omega)}\leq& \|e^{\gamma_0\Delta t}w(\tau_0)\|_{L^\infty(\Omega)}+\int_{\tau_0}^t\|e^{\gamma_0\Delta (t-s)}\Delta((\gamma(v(s)-\gamma_0)u(s))\|_{L^\infty(\Omega)}ds\\
	\leq&\|e^{\gamma_0\Delta t}w(\tau_0)\|_{L^\infty(\Omega)}+C\int_{\tau_0}^te^{-\gamma_0\mu_1 (t-s)}(1+(t-s)^{-\frac12})\|\nabla((\gamma(v(s)-\gamma_0)u(s))\|_{L^\infty(\Omega)}ds
	\end{split}
\end{equation*}
Since $\|\nabla u\|_{L^\infty(\Omega)}\leq C$ for all $t\geq1$ due to Lemma \ref{ubound}, we obtain that for $t\geq1$
\begin{equation*}
\begin{split}
	\|\nabla((\gamma(v(t)-\gamma_0)u(t))\|_{L^\infty(\Omega)}\leq& \|\gamma'(v(t))u(t)\nabla v(t)\|_{L^\infty(\Omega)}+\|(\gamma(v(t))-\gamma(\overline{u_0}))\nabla u(t)\|_{L^\infty(\Omega)}\\
	\leq& C\|\nabla v\|_{L^\infty(\Omega)}+C\|\gamma(v(t))-\gamma(\overline{u_0})\|_{L^\infty(\Omega)}\\
	\leq&C(\|\nabla v\|_{L^\infty(\Omega)}+\|v(t)-\overline{u_0}\|_{L^\infty(\Omega)}
\end{split}
\end{equation*}
where we use the fact that
\begin{equation}
	|\gamma(v)-\gamma(\overline{u_0})|=|(v(t)-\overline{u_0})\int_0^1\gamma'(sv+(1-s)\overline{u_0})ds|\leq C|v(t)-\overline{u_0}|
\end{equation}since $sv(t,x)+(1-s)\overline{u_0}$ is uniformly bounded from above and below on $[0,+\infty)\times\overline{\Omega}$ for all $s\in[0,1]$.

As a result, recalling Lemma \ref{lmv1decay} and Lemma \ref{lmpq}, we may infer that
\begin{equation}\label{udecayinf}
\begin{split}
\|w(t)\|_{L^\infty(\Omega)}\leq& \|e^{\gamma_0\Delta t}w(\tau_0)\|_{L^\infty(\Omega)}+\int_{\tau_0}^t\|e^{\gamma_0\Delta (t-s)}\Delta((\gamma(v(s)-\gamma_0)u(s))\|_{L^\infty(\Omega)}ds\\
\leq&Ce^{-\gamma_0\mu_1 t}\|w(\tau_0)\|_{L^\infty(\Omega)}+C\int_{0}^te^{-\gamma_0\mu_1 (t-s)}(1+(t-s)^{-\frac12})e^{-\alpha s}ds\\
\leq& Ce^{-\alpha't}
\end{split}
\end{equation}with any $\alpha'<\min\{\gamma_0\mu_1,\alpha\}$. Here, we  use the fact that for any $\beta\geq\kappa>0$
\begin{equation*}
\begin{split}
\int_{0}^te^{-\beta (t-s)}(1+(t-s)^{-\frac12})e^{-\kappa s}ds
=&e^{-\beta t}\int_0^{t}e^{(\beta-\kappa) s}(1+(t-s)^{-\frac12})ds\\
\leq&e^{-\kappa t}\bigg(t+2t^{\frac12}\bigg)\leq Ce^{-\kappa't}
\end{split}
\end{equation*}with any $\kappa'<\kappa$ and on the other hand, for $0<\beta<\kappa$
\begin{equation*}
\begin{split}
\int_{0}^te^{-\beta (t-s)}(1+(t-s)^{-\frac12})e^{-\kappa s}ds
=&e^{-\beta t}\int_0^{t}e^{(\beta-\kappa) s}(1+(t-s)^{-\frac12})ds\\
\leq&e^{-\beta t}\bigg(t+2t^{\frac12}\bigg)\leq Ce^{-\beta't}
\end{split}
\end{equation*}with any $\beta'<\beta.$

\noindent\textbf{Proof of Theorem \ref{TH1}. Convergence:} By Lemma \ref{lmv1decay} and \eqref{udecayinf}, we conclude that
\begin{equation*}
\|u(\cdot,t)-\overline{u_0}\|_{L^\infty(\Omega)}+\|v(\cdot,t)-\overline{u_0}\|_{W^{1,\infty}(\Omega)}\leq Ce^{-\alpha' t},\;\;\forall \;t\geq1
\end{equation*}
with some $\alpha'>0$ and $C>0$ depending on $u_0,\gamma,n$ and $\Omega$.

\bigskip
\noindent\textbf{Acknowledgments} \\
This work was supported by Hubei Provincial Natural Science Foundation under the grant No. 2020CFB602.

\end{document}